\documentclass{amsart}
\usepackage{amsaddr}
\usepackage{tikz,pgflibraryshapes}
\usetikzlibrary{calc}
\usetikzlibrary{intersections}
\usetikzlibrary{arrows,shapes,positioning}
\usetikzlibrary{decorations.markings}

\usepackage[pdftex,pdfcreator={LaTeX},pdfproducer={PDFLaTeX},colorlinks,
    bookmarks,
    pdftitle={},
    pdfauthor={},
    pdfkeywords={},
    linkcolor=blue,                  % color of internal links
    citecolor=red,                   % color of links to bibliography
    filecolor=red,                   % color of file links
    urlcolor=cyan                    % color of external links
]{hyperref}

\newcommand{\texfilename}{GirthRegular_arXiv}
\newcommand{\Title}[2][]{\title{#2}\thanks{#1}}
\newcommand{\Author}[1][]{}

\newcommand{\auth}[3][]{\author[#2]{#3}}
\newcommand{\ead}[1]{\email{#1}}
\newcommand{\naddress}[2][]{\address[#1]{#2}}
\newcommand{\raddress}[2][]{\address[#1]{#2}}
\newcommand{\scl}[2][2000]{\subjclass[#1]{#2}}
\newcommand{\printscl}{}
\newcommand{\sep}{, }
\newcommand{\Maketitle}{\maketitle}

\newenvironment{frmatter}{}{}
\usepackage[english]{babel}
\usepackage[utf8x]{inputenc}
\usepackage{array}
\usepackage{graphicx}
\usepackage{enumerate}
\usepackage{color}
\usepackage[small]{caption}
\usepackage[normalem]{ulem}
\usepackage{latexsym,amssymb,amsmath,amsopn,amsthm,amscd,amsfonts,url}

% pdflatex --jobname=imeSlike GirthRegular_<arXiv|submission>.tex
% kjer je imeSlike doloceno z ukazom \beginpgfgraphicnamed (pgfmanual p. 500)
\pgfrealjobname{\texfilename}

\newcolumntype{C}[1]{>{\centering\arraybackslash}m{#1\linewidth}}

\makeatletter
\newcommand{\pitem}{%
\item[(\arabic{enumi}')]\protected@edef\@currentlabel{\arabic{enumi}'}%
}
\makeatother

\newcommand{\G}{\ensuremath{\Gamma}}
\newcommand{\D}{\ensuremath{\mathbb{D}}}
\newcommand{\Z}{\ensuremath{\mathbb{Z}}}

\newcommand{\Aut}{\ensuremath{\operatorname{Aut}}}

\newcommand{\out}{\ensuremath{\operatorname{out}}}

\newcommand{\X}{\ensuremath{\mathcal{X}}}
\newcommand{\Y}{\ensuremath{\mathcal{Y}}}

\newcommand{\C}{\ensuremath{\mathcal{C}}}

\newcommand{\mD}{\ensuremath{\mathcal{D}}}
\newcommand{\Cay}{\ensuremath{\operatorname{Cay}}}
\newcommand{\Tr}{\ensuremath{\operatorname{Tr}}}

\newcommand{\M}{\ensuremath{\mathcal{M}}}
\newcommand{\T}{\ensuremath{\mathcal{T}}}
\newcommand{\s}{\ensuremath{\mathcal{S}}}

\newcommand{\lr}{\leftrightarrow}

\newcommand{\thmcounter}{definition}
\newtheorem{\thmcounter}{Definition}[section]
\newtheorem{theorem}[\thmcounter]{Theorem}
\newtheorem{proposition}[\thmcounter]{Proposition}
\newtheorem{lemma}[\thmcounter]{Lemma}
\newtheorem{corollary}[\thmcounter]{Corollary}

\begin{document}
\newtheorem{question}[\thmcounter]{Question}
\newtheorem{note}[\thmcounter]{Note}

\begin{frmatter}

\Title[Supported in part by the Slovenian Research Agency,
       projects J1-5433, J1-6720, and P1-0294.]{Girth-regular graphs}

\auth[fmf,imfm]{P.~Poto\v{c}nik}{Primo\v{z} Poto\v{c}nik}
\ead{primoz.potocnik@fmf.uni-lj.si}

\address[fmf]{Faculty of Mathematics and Physics, University of Ljubljana, \\
              Jadranska 19, SI-1000 Ljubljana, Slovenia}

\naddress[imfm]{Institute of Mathematics, Physics and Mechanics, \\
                Jadranska 19, SI-1000 Ljubljana, Slovenia}

\Author

\auth[fmf]{J.~Vidali}{Jano\v{s} Vidali}
\ead{janos.vidali@fmf.uni-lj.si}

\raddress[fmf]{Faculty of Mathematics and Physics, University of Ljubljana, \\
               Jadranska 19, SI-1000 Ljubljana, Slovenia}

\Author

\scl[2000]{05C38}

\keywords{graph\sep girth-regular\sep cubic\sep girth}

\begin{abstract}
We introduce a notion of a {\em girth-regular} graph
as a $k$-regular graph for which there exists a non-descending sequence
$(a_1, a_2, \dots, a_k)$ (called the {\em signature}) giving,
for every vertex $u$ of the graph,
the number of girth cycles the edges with end-vertex $u$ lie on.
Girth-regularity generalises two very different aspects of symmetry
in graph theory:
that of vertex transitivity and that of distance-regularity.
For general girth-regular graphs,
we give some results on the extremal cases of signatures.
We then focus on the cubic case and provide a characterisation
of cubic girth-regular graphs of girth up to $5$.

\printscl
\end{abstract}

\end{frmatter}

\Maketitle

%%%%%%%%%%%%%%%%%%%%%%%%%%%%%%%%%%%%%%%%%%%%%%%%%%%%%%%%%

\section{Introduction}
\label{sec:intro}

This paper stems from our research
of finite connected vertex-transitive graphs of small girth.
The girth (the length of a shortest cycle in the graph)
is an important graph theoretical invariant that is often studied
in connection with the symmetry properties of graphs.
For example, cubic arc-transitive graphs
(a graph is called arc-transitive
if its automorphism group acts transitively on its {\em arcs},
where an arc is an ordered pair of adjacent vertices)
and cubic semisymmetric
(regular, edge-transitive but not vertex-transitive)
graphs of girth up to $9$ and $10$
have been studied in~\cite{cn07,km09} and~\cite{cz17}, respectively,
and tetravalent edge-transitive graphs of girths $3$ and $4$
have been considered in~\cite{pw07}.
Recently,
a classification of all cubic vertex-transitive graphs
of girth up to $5$ was obtained in~\cite{ejs19}.

In our investigation of vertex-transitive graphs of small girth, it became apparent to us that
 the condition of vertex-transitivity is almost never used in its full strength.
What was needed in most of the arguments was only a particular form of uniformity of the
distribution of girth cycles throughout the graph.
 Let us make this more precise.

For an edge $e$ of a graph $\Gamma$,
let $\epsilon(e)$ denote the number of girth cycles containing the edge $e$.
Let $v$ be a vertex of $\Gamma$
and let $\{e_1, \ldots, e_k\}$ be the set of edges incident to $v$
ordered in such a way that
$\epsilon(e_1) \le \epsilon(e_2) \le \ldots \le \epsilon(e_k)$.
Then the $k$-tuple $(\epsilon(e_1), \epsilon(e_2), \ldots, \epsilon(e_k))$
is called the {\em signature} of $v$.
A graph $\Gamma$ is called {\em girth-regular}
provided all of its vertices have the same signature.
The signature of a vertex is then called the signature of the graph.

We should like to point out
that girth-regular graphs of signature $(a,a,\ldots,a)$ for some $a$
have been introduced under the name {\em edge-girth-regular graphs}
in~\cite{jkm18},
where the authors focused on the families
of cubic and tetravalent edge-girth-regular graphs.

By definition, every girth-regular graph is regular
(in the sense that all its vertices have the same valence).
Further, it is clear that every vertex-transitive
as well as every semisymmetric graph is girth-regular.
Slightly less obvious is the fact that every distance-regular graph is also girth-regular.
The notion of girth-regularity is thus a natural generalisation of all these notions.
On the other hand,
examples of girth-regular graphs exist that are neither vertex-transitive,
nor semisymmetric nor distance-regular (for example, the truncation of a $3$-prism is such a graph; see Section~\ref{ssec:trunc}).

The central question we would like to propose and address in this paper
is the following:

\begin{question}
Given integers $k$ and $g$,
for which tuples $\sigma = (a_1, a_2, \ldots, a_k) \in \Z^k$
does a girth-regular graph of girth $g$ and signature $\sigma$ exist?
\end{question}

The above question seems to be very difficult if considered in its full generality.
We begin by
stating three theorems proved in Section~\ref{sec:bound}, which
give an upper bound on the entries $a_i$ of the signature
in terms of the valence $k$ and the girth $g$, and consider the case where this upper bound is attained.

\begin{theorem}
\label{the:1}
If $\Gamma$ is a girth-regular graph of valence $k$,
girth $g$, and signature $(a_1, \ldots, a_k)$,
then $a_k \le (k-1)^d$, where $d=\lfloor g/2 \rfloor$.
\end{theorem}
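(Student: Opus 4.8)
The plan is to fix a single edge and bound the number $\epsilon(e)$ of girth cycles through it. Since the signature is the ascending tuple $(\epsilon(e_1),\dots,\epsilon(e_k))$ of the edges incident to a vertex, we have $a_k = \max\{\epsilon(e) : e \ni u\}$ for any vertex $u$, and by girth-regularity this maximum does not depend on $u$. Thus it suffices to prove $\epsilon(e) \le (k-1)^d$ for an arbitrary edge $e = uv$, where $d = \lfloor g/2\rfloor$.

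The structural fact I would isolate first is a uniqueness statement for short paths: in a graph of girth $g$, any two distinct paths joining the same pair of vertices have total length at least $g$ (otherwise their union would contain a cycle shorter than $g$). Consequently, whenever two vertices are joined by a path of length at most $\lfloor (g-1)/2\rfloor$, that path is the unique geodesic between them.

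With this in hand, I would construct an injection from the set of girth cycles through $e$ into the set of non-backtracking walks of length $d$ that start at $u$ and whose first edge is not $e$. Given a girth cycle $C$ through $e$, traverse $C$ from $u$ along the edge of $C$ at $u$ other than $e$, and record the initial segment $P(C)$ of length $d$; this is a path, hence a non-backtracking walk of length $d$ with first edge $\ne e$. Writing $c$ for the endpoint of $P(C)$, the complementary arc of $C$ from $c$ back to $v$ is a path of length $g-1-d = \lfloor (g-1)/2\rfloor$; any other $c$–$v$ path would, being distinct from this arc, have length at least $g-(g-1-d) = d+1$, so the arc is the unique geodesic from $c$ to $v$. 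Hence $C$ is recovered from $P(C)$ as $P(C)$ followed by this unique geodesic followed by the edge $e$, which proves injectivity.

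Finally, the number of non-backtracking walks of length $d$ starting at $u$ with first edge $\ne e$ is exactly $(k-1)^d$: there are $k-1$ admissible choices for the first edge and $k-1$ choices at each of the remaining $d-1$ steps. The injection then yields $\epsilon(e) \le (k-1)^d$, and therefore $a_k \le (k-1)^d$. The main obstacle is the injectivity step, and within it the bookkeeping that the complementary arc has length exactly $\lfloor (g-1)/2\rfloor$ in both the even and odd girth cases, so that the short-path uniqueness lemma applies; the final count is then routine.
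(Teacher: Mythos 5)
Your proof is correct. It establishes the right reduction (bounding $\epsilon(e)$ for an arbitrary edge suffices, since $a_k=\max_{e\ni u}\epsilon(e)$), the injection $C\mapsto P(C)$ is well defined because a girth cycle through $uv$ has exactly one edge at $u$ other than $uv$, and the injectivity argument is sound: the complementary arc from $c$ to $v$ has length $g-1-d=\lfloor(g-1)/2\rfloor$, while any other $c$--$v$ path has length at least $d+1$, which is strictly larger in both parities, so the arc is recovered as the unique shortest $c$--$v$ path and hence $C$ is recovered from $P(C)$. The final count of $(k-1)^d$ non-backtracking walks is routine, and girth-regularity is indeed not needed for the bound itself.

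Your route differs from the paper's in packaging, though the underlying combinatorics is the same tree-likeness of the ball around the edge $uv$. The paper introduces the distance partition $D^i_j(u,v)=S_i(u)\cap S_j(v)$, records its structural properties (independence of the cells, degrees between consecutive cells, $|D^{i-1}_i|=(k-1)^{i-1}$), and then splits by parity: for even $g$ it identifies $\epsilon(uv)$ with $|E(D^{d-1}_d,D^d_{d-1})|\le(k-1)|D^{d-1}_d|$, and for odd $g$ with $|D^d_d|\le(k-1)^d$. You instead give a single injection of girth cycles into non-backtracking walks of length $d$ from $u$, keyed to a uniqueness lemma for short paths, which handles both parities uniformly and is arguably more self-contained. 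What the paper's heavier setup buys is reuse: the same sets $D^i_j$ and the facts about them carry the subsequent analysis of the extremal case $a_k=(k-1)^d$ in Theorems~\ref{the:2} and~\ref{the:3}, which your injection by itself would not immediately support.
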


\begin{theorem}
\label{the:2}
If $\Gamma$ is a connected girth-regular graph of valence $k$,
girth $2d$ for some integer $d$,
and signature $(a_1, \ldots, a_k)$ such that $a_k = (k-1)^{d}$,
then $a_1 = a_2 = \ldots = a_k$
and $\Gamma$ is the incidence graph
of a generalised $d$-gon of order $(k-1,k-1)$.

In particular, if $k=3$, then $g \in \{4,6,8,12\}$ and $\Gamma$ is isomorphic to $K_{3,3}$ (if $g=4$), the Heawood graph (if $g=6$), the Tutte-Coxeter graph (if $g=8$) or to the Tutte 12-cage (if $g=12$).
\end{theorem}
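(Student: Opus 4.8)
The plan is to extract from the proof of Theorem~\ref{the:1} a precise description of its equality case and then bootstrap it. Fix a vertex $u$ with neighbours $v_1,\dots,v_k$, write $e_i=\{u,v_i\}$, and let $D_i$ denote the set of vertices at distance $i$ from $u$. Since the girth is $2d$, every non-backtracking path of length at most $d$ issuing from $u$ is a geodesic, and any two distinct geodesics joining $u$ to the same $w\in D_d$ are internally disjoint (otherwise their union would contain a cycle shorter than $2d$), so their union is a girth cycle. For $w\in D_d$ let $m_w$ be the number of geodesics from $u$ to $w$, and $m_w^{(i)}$ the number of those whose first edge is $e_i$. I would record the bookkeeping identities $\sum_i m_w^{(i)}=m_w$ and $\sum_{w\in D_d}m_w=k(k-1)^{d-1}$, together with $m_w^{(i)}\le 1$ (two geodesics through $v_i$ to the same $w$ would share the internal vertex $v_i$). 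Counting girth cycles through $e_i$ by the pair of geodesic halves meeting at their midpoint gives $\epsilon(e_i)=\sum_w m_w^{(i)}(m_w-m_w^{(i)})$; since $m_w=\sum_i m_w^{(i)}\le k$, this is at most $(k-1)\cdot(k-1)^{d-1}=(k-1)^d$, recovering Theorem~\ref{the:1} and showing that $\epsilon(e_i)=(k-1)^d$ holds exactly when $m_w=k$ for every $w$ reachable from $u$ through $v_i$.

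Next I would use the hypothesis $a_k=(k-1)^d$: at every vertex $u$ some edge $e_i$ attains the bound. For that $e_i$, set $W_i=\{w\in D_d:m_w^{(i)}=1\}$, so $|W_i|=(k-1)^{d-1}$ and $m_w=k$ for all $w\in W_i$. Then $\sum_{w\in W_i}m_w=k(k-1)^{d-1}$ already exhausts the global total $\sum_{w\in D_d}m_w$, forcing $D_d=W_i$. Moreover $m_w=k$ means $w$ is reached by a geodesic in each of the $k$ directions at $u$, so every other edge $e_j$ also attains the bound. This yields $a_1=\dots=a_k=(k-1)^d$, the first assertion.

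For the structural conclusion I would analyse the distance partition $D_0,\dots,D_d$ about $u$. The ball of radius $d-1$ is a tree (a cycle inside it would be shorter than $2d$), so for $i\le d-1$ each vertex of $D_i$ has a unique neighbour in $D_{i-1}$ and none inside $D_i$. The equality analysis gives $m_w=k$ for every $w\in D_d$; since the $k$ geodesics to $w$ arrive through $k$ distinct neighbours of $w$ and $\deg w=k$, all neighbours of $w$ lie in $D_{d-1}$. Hence every edge of $\Gamma$ joins consecutive layers, the parity of the layers is a proper $2$-colouring, and $\Gamma$ is bipartite of diameter $d$ (repeating the argument from any base vertex shows no two vertices are farther apart than $d$). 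A connected $k$-regular bipartite graph of girth $2d$ and diameter $d$ is precisely the incidence graph of a generalised $d$-gon of order $(k-1,k-1)$, completing the general statement.

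Finally, the cubic case follows by feeding $k=3$ into this description: a generalised $d$-gon of order $(2,2)$ is thick, so by the Feit--Higman theorem $d\in\{2,3,4,6,8\}$, and the non-existence of a thick generalised octagon with equal parameters rules out $d=8$, leaving $g=2d\in\{4,6,8,12\}$. For each remaining value the order-$(2,2)$ generalised polygon is unique, with incidence graph $K_{3,3}$, the Heawood graph, the Tutte--Coxeter graph and the Tutte $12$-cage respectively; I would simply invoke these classical existence-and-uniqueness results. I expect the main obstacle to be the first paragraph, namely pinning down the exact equality condition in Theorem~\ref{the:1} through the interplay of the counts $m_w$ and $m_w^{(i)}$ and then bootstrapping it cleanly to the global layered structure; the corollary, by contrast, rests essentially on the deep classification of small generalised polygons rather than on new work.
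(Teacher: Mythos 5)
Your argument is correct, but it takes a genuinely different route from the paper's. The paper fixes a single extremal edge $uv$ with $\epsilon(uv)=(k-1)^d$ and works with the double distance partition $D^i_j(u,v)=S_i(u)\cap S_j(v)$: equality in the bound of Theorem~\ref{the:1} forces every vertex of $D^{d-1}_d\cup D^d_{d-1}$ to keep all of its remaining neighbours inside the union $D$ of these cells, so connectivity gives $V(\Gamma)=D$, whose cardinality is exactly the Moore bound of Proposition~\ref{prop:vertbound}; the generalised-polygon conclusion is then read off from the characterisation of the extremal graphs in Note~\ref{note:prop:vertbound}, and the constancy of the signature is inherited from that known structure rather than argued directly. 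You instead argue vertex-centrically: you count the $k(k-1)^{d-1}$ geodesics of length $d$ issuing from a fixed vertex, distribute them over the $k$ incident edges via the multiplicities $m_w^{(i)}\le 1$, and extract the identity $\epsilon(e_i)=\sum_w m_w^{(i)}(m_w-m_w^{(i)})$, whose equality case forces $m_w=k$ for every $w\in D_d$. This buys two things the paper leaves implicit: an explicit combinatorial proof that $a_1=\cdots=a_k$ (the paper delegates this to the distance-regularity of generalised polygon incidence graphs), and the layered, bipartite, diameter-$d$ structure, from which you invoke the definitional characterisation of a generalised $d$-gon (connected, $k$-regular, bipartite, girth $2d$, diameter $d$) instead of passing through the vertex count. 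Both proofs end identically, citing Feit--Higman and the uniqueness of the order-$(2,2)$ polygons for the cubic case. The only points worth tightening in a final write-up are the observation that a non-backtracking walk of length at most $d$ is automatically a simple path (needed for the count $k(k-1)^{d-1}$ and for your claim that such walks are geodesics) and the remark that $d\ge 2$, so that $v_i$ is genuinely an internal vertex in the argument for $m_w^{(i)}\le 1$; both follow immediately from the girth being $2d$.
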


For a description of the graphs mentioned in the above theorem,
see Note~\ref{note:prop:vertbound}.

\begin{theorem}
\label{the:3}
If $\Gamma$ is a connected $3$-valent girth-regular graph
of girth $2d+1$ for some integer $d$
and signature $(a_1,a_2,a_3)$ such that $a_3 = 2^d$,
then $\Gamma$ is isomorphic to $K_4$ or the Petersen graph.
\end{theorem}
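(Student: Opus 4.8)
The plan is to exploit the extremal hypothesis $a_3 = 2^d$ at a single edge, using the fact that since the girth $2d+1$ exceeds $2d$, every ball of radius $d$ in $\Gamma$ induces a tree. First I would fix an edge $e = uv$ with $\epsilon(e) = a_3 = 2^d$; such an edge exists at every vertex because $\Gamma$ is girth-regular with top signature entry $2^d$. Deleting $e$ and growing the tree of depth $d$ hanging off $u$ away from $v$ produces, since no cycle of length at most $2d$ exists, a perfect binary tree $T_u$ with exactly $2^d$ leaves $L_u$, each vertex at distance $<d$ from $u$ contributing two children; symmetrically one obtains $T_v$ with leaf set $L_v$ and $|L_v| = 2^d$. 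A short girth argument shows that the interiors of $T_u$ and $T_v$, that is, the vertices at distance $<d$ from $u$ and from $v$ respectively, are disjoint, since a common interior vertex would close a cycle of length at most $2d-1$.

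Next I would set up a correspondence between the girth cycles through $e$ and the common leaves $L_u \cap L_v$. Any girth cycle $C$ through $e$ has length $2d+1$; deleting $e$ leaves a path of length $2d$ from $u$ to $v$ whose midpoint $m$ lies at distance exactly $d$ from both $u$ and $v$. Because balls of radius $d$ are trees, the two half-paths are the unique geodesics from $u$ and from $v$ to $m$, so $m \in L_u \cap L_v$ and $C$ is recovered from $m$; thus $C \mapsto m$ is injective and $\epsilon(e) \le |L_u \cap L_v| \le 2^d$, which reproves Theorem~\ref{the:1} locally. The hypothesis $\epsilon(e) = 2^d$ therefore forces $L_u = L_v$ and that every common leaf is the midpoint of a genuine girth cycle, so at each leaf the edge towards $u$ and the edge towards $v$ are distinct.

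I would then count vertices. The structure so far consists of $u$, $v$, the two disjoint interiors, each of size $2^d - 2$, and the shared leaf set of size $2^d$, for a total of $3\cdot 2^d - 2$, the Moore bound. Every non-leaf already has degree $3$, namely a parent and two children, or, for $u$ and $v$, the edge $uv$ together with two children, while each leaf so far carries only its two distinct parent edges; its third edge cannot reach any saturated vertex and so must join it to another leaf. Hence the third edges form a perfect matching on $L_u$, the whole set is a closed $3$-regular subgraph, and by connectivity it is all of $\Gamma$. Consequently $\Gamma$ is a cubic graph of girth $2d+1$ meeting the Moore bound, that is, a Moore graph. Invoking the classification of Moore graphs, where cubic examples occur only in girth $3$, namely $K_4$ for $d=1$, and in girth $5$, namely the Petersen graph for $d=2$, with none of diameter at least $3$, completes the proof.

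The main obstacle I anticipate is the careful bookkeeping in the local tree analysis: rigorously establishing that $T_u$ and $T_v$ are perfect binary trees with disjoint interiors, that $C \mapsto m$ is a genuine injection onto leaves carrying distinct parent edges, and that extremality forces the exact equality $L_u = L_v$ rather than merely a large intersection. Once this local rigidity is secured, the global conclusion that $\Gamma$ is a closed $3$-regular set of Moore size is immediate from connectivity, and the final identification reduces to the known nonexistence of cubic Moore graphs beyond $K_4$ and the Petersen graph.
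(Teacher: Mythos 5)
Your setup is sound and runs parallel to the paper's: your trees $T_u$, $T_v$ and shared leaf set $L_u=L_v$ are the paper's sets $D^{i}_{i+1}$, $D^{i+1}_{i}$ and $D^d_d$, your injective cycle-to-midpoint map gives the bound $\epsilon(uv)\le 2^d$ and, under extremality, the identification $L_u=L_v$ (the paper's Lemma~\ref{lem:7}), and your vertex count $3\cdot 2^d-2$ is exactly the Moore bound of Proposition~\ref{prop:vertbound}. (One small imprecision: the ball of radius $d$ does \emph{not} induce a tree when the girth is $2d+1$ --- it contains every girth cycle through its centre; what you actually need and use is only that geodesics of length at most $d$ are unique and that the $2^d$ depth-$d$ descendants are distinct, both of which do follow from the absence of cycles of length $\le 2d$.)

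The genuine gap is the sentence ``its third edge cannot reach any saturated vertex and so must join it to another leaf.'' The dichotomy is false: the third edge of a leaf $m$ could also go to a \emph{new} vertex $w$ outside your set $D$, necessarily with $d(w,u)=d(w,v)=d+1$, and nothing you have said excludes this. Saturation of the interior vertices only tells you the edge does not re-enter the interior; it does not force it to stay inside $D$ at all. Ruling out such a $w$ is precisely the hard core of the theorem and occupies most of the paper's proof: assuming $w\in D^{d+1}_{d+1}$ exists, the paper picks a \emph{second} extremal edge $u'v'$ through the neighbour $v'$ of $u$ on the geodesic to $m$ (this is where girth-regularity is used beyond the existence of a single extremal edge), partitions $D^d_{d-1}$ and $D^d_d$ into sets $X,Y,Z$ and $X',Y',Z'$ according to distances from $u'$ and $v'$, and derives that some vertex would need four neighbours --- which is where the hypothesis $k=3$ finally enters. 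That the theorem is stated only for valence $3$ (unlike the even-girth Theorem~\ref{the:2}) is a warning sign that this step cannot be dispatched by the local bookkeeping you describe; as written, your argument would ``prove'' the same conclusion for any edge lying on $2^d$ girth cycles and for any valence, which is not what is true. Everything after this step (closed $3$-regular subgraph, connectivity, Moore graph classification) is fine, but the proof is incomplete without closing this case.
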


In the second part of the paper,
we focus on $3$-valent graphs (also called {\em cubic} graphs)
and obtain a complete classification
of cubic girth-regular graphs of girth at most $5$
(see Theorem~\ref{the:main} below).
Prisms and M\"obius ladders are defined in Section~\ref{sec:g1234},
the notion of a dihedral scheme and truncation
is defined in Section~\ref{ssec:trunc},
and graphs arising from maps are discussed in Section~\ref{ssec:maps}.

\begin{theorem}
\label{the:main}
Let $\Gamma$ be a connected cubic girth-regular of girth $g$ with $g\le 5$.
Then either the signature of $\Gamma$ is $(0,1,1)$
and $\Gamma$ is a truncation of a dihedral scheme on some $g$-regular graph
(possibly with parallel edges),
or one of the following occurs:
\begin{enumerate}
\item $g=3$ and $\Gamma \cong K_4$ with signature $(2,2,2)$;
\item $g = 4$ and $\Gamma$ is isomorphic to a prism or to a Möbius ladder,
with signature $(4, 4, 4)$ if $\Gamma \cong K_{3,3}$,
signature $(2, 2, 2)$ if $\Gamma$ is isomorphic to the cube $Q_3$,
and signature $(1, 1, 2)$ otherwise;
\item $g=5$ and $\Gamma$ is isomorphic
to the Petersen graph with signature $(4,4,4)$,
or to the dodecahedron with signature $(2,2,2)$.
\end{enumerate}
\end{theorem}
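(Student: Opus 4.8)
The plan is to fix the girth $g\in\{3,4,5\}$ and run a case analysis on the signature $(a_1,a_2,a_3)$, which I first confine to a very short list. Theorem~\ref{the:1} gives $a_3\le 2^{d}$ with $d=\lfloor g/2\rfloor$, so $a_3\le 2$ when $g=3$ and $a_3\le 4$ when $g\in\{4,5\}$. Two elementary counts prune the list. Since a girth cycle through a vertex $v$ uses exactly two of the three edges at $v$, the number of girth cycles through $v$ is $\tfrac12(a_1+a_2+a_3)$; hence $a_1+a_2+a_3$ is even, and if $c_{ij}$ denotes the number of girth cycles through $v$ using the $i$-th and $j$-th edge, then $c_{ij}=\tfrac12(a_i+a_j-a_l)\ge 0$ (with $\{i,j,l\}=\{1,2,3\}$), giving the triangle inequality $a_3\le a_1+a_2$; also $a_2,a_3\ge 1$ as every vertex lies on a girth cycle. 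The extremal value $a_3=2^{d}$ is then dispatched by Theorems~\ref{the:2} and~\ref{the:3}: for even girth it forces $\Gamma\cong K_{3,3}$ (here only $g=4$, signature $(4,4,4)$), and for odd girth it forces $K_4$ or the Petersen graph — so for $g=3$ it already yields $\Gamma\cong K_4$ with signature $(2,2,2)$, and for $g=5$ it yields the Petersen graph with signature $(4,4,4)$.

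I would treat $(0,1,1)$ uniformly in $g$ as the truncation case. Here $\tfrac12(a_1+a_2+a_3)=1$, so each vertex lies on a \emph{unique} girth cycle, carried by its two $\epsilon=1$ edges, while its $\epsilon=0$ edge lies on none. Because each vertex meets only one girth cycle, any two vertices sharing a girth cycle share the \emph{same} one; thus the girth cycles are pairwise disjoint and partition $V(\Gamma)$ into $g$-cycles, each vertex of such a cycle sending out a single $\epsilon=0$ edge. Contracting every girth cycle to a point yields a $g$-regular (multi)graph $G$, and recording the cyclic order in which the $\epsilon=0$ edges leave each cycle equips $G$ with a dihedral scheme whose truncation is exactly $\Gamma$. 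Conversely, by the construction of Section~\ref{ssec:trunc} such a truncation is cubic, has its vertex-cycles as girth cycles, and has signature $(0,1,1)$, which closes this case.

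After removing the extremal rows, girth $3$ is finished, its only remaining signature being $(0,1,1)$; the genuine case work is at $g\in\{4,5\}$, where the surviving non-extremal rows are $(1,1,2),(0,2,2),(2,2,2),(0,3,3),(1,2,3),(2,3,3)$. I would eliminate $(0,2,2),(0,3,3),(1,2,3),(2,3,3)$ by the same engine in both girths: the pair-structure $c_{ij}$ prescribes which girth cycles share which edges at $v$, and pushing this to a neighbour forces some edge to carry two different values of $\epsilon$, which is absurd since $\epsilon$ is single-valued. The signature $(1,1,2)$ behaves differently by parity: labelling the $\epsilon=2$ edges as ``rungs'' and the $\epsilon=1$ edges as ``rails'' and propagating these labels around a girth cycle, for $g=5$ the odd length compels one edge to be simultaneously a rung and a rail — a contradiction that kills $(1,1,2)$ — whereas for $g=4$ the even cycle closes consistently, the rungs form a perfect matching and the rails a $2$-regular subgraph, and following rungs and rails exhibits $\Gamma$ as a ladder which, upon closing up under connectivity, is a prism or a M\"obius ladder.

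The heart of the matter is $(2,2,2)$, where every edge lies on exactly two girth cycles and every vertex on exactly three; this is where the cube ($g=4$) and, above all, the dodecahedron ($g=5$) must be pinned down, and I expect it to be the main obstacle. The clean route reads the girth cycles as the faces of a map: each edge borders exactly two of them and the three cycles at a vertex occur in a single rotational order, so the girth cycles assemble into a $2$-cell embedding, and writing $F$ for their number the incidences give $gF=3V$. If the embedding lies on the sphere, Euler's formula $V-E+F=2$ with $E=\tfrac32V$ and $F=\tfrac3gV$ yields $V=\tfrac{4g}{6-g}$, hence $V\in\{4,8,20\}$ for $g\in\{3,4,5\}$, matching precisely the three cubic Platonic solids $K_4$, $Q_3$ and the dodecahedron. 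The delicate point — equivalent to controlling global closure in a direct inductive build-up from one vertex — is to prove that the girth cycles really do form a spherical polyhedral map rather than merely agreeing locally; establishing this non-degeneracy and sphericity, and thereby forcing the dodecahedron, is where I would expect to spend the most effort.
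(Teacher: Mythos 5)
Your overall architecture -- bound $a_3\le 2^d$, prune signatures by parity and the triangle inequality, dispatch the extremal rows via Theorems~\ref{the:2} and~\ref{the:3}, realise $(0,1,1)$ as a truncation, and treat $(2,2,2)$ via an Euler-characteristic count -- is exactly the paper's, and your direct ``rungs and rails'' ladder argument for $(1,1,2)$ at $g=4$ is a sound, more elementary substitute for the paper's route through Theorem~\ref{thm:trieq1} (truncation of a two-faced map on the sphere or projective plane). However, there are two genuine gaps. First, in the $(2,2,2)$ case you compute $V=4g/(6-g)$ only \emph{``if the embedding lies on the sphere''} and then announce that proving sphericity is where you would spend your effort. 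That is the wrong target: the map built from the girth cycles need not be spherical a priori (the gluing of Lemma~\ref{lem:map} always produces \emph{some} closed surface, orientable or not, so there is no ``global closure'' issue to worry about), and the correct move is to use $\chi(\s)\le 2$ for an arbitrary closed surface, which gives $V=2g\chi/(6-g)$ with $\chi\in\{1,2\}$, and then to kill the projective-plane cases $\chi=1$ explicitly: for $g=4$ one gets $V=4$, i.e.\ the hemicube with skeleton $K_4$ of girth $3$; for $g=5$ one gets $V=10$, forcing the Petersen graph by the Moore bound, whose signature is $(4,4,4)$, not $(2,2,2)$. Your plan as written simply omits these non-spherical possibilities.

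Second, the eliminations of $(1,2,3)$ and $(2,3,3)$ are asserted, not proved. The sentence ``pushing this to a neighbour forces some edge to carry two different values of $\epsilon$'' correctly describes the \emph{flavour} of the argument for $c=3$ at girth $4$ (a short chase around the two $4$-cycle diagonals), but for $(2,3,3)$ at girth $5$ the paper needs a full page of case analysis: one must track which of the six arcs around the heavy edge $uv$ are saturated, use the absence of $3$- and $4$-cycles to constrain common neighbours, and derive a contradiction from the nonexistence of a suitable common neighbour of $x$ and the second neighbour of $w_{00}$. ``The same engine in both girths'' is not a proof of this, and it is the single longest step in the paper's classification. (A small bonus you are leaving on the table: for $g=5$ the signature $(1,2,3)$ satisfies $a\ge1$ and $c=a+b$, so it is already excluded by your own rung/rail parity observation -- the paper's Lemma~\ref{lem:eventriineq}(3) and Corollary~\ref{cor:oddg} -- with no further casework.)
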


Since every vertex-transitive graph is girth-regular,
the above result can be viewed as a partial generalisation
of the classification~\cite{cn07}
of arc-transitive cubic graphs of girth at most $9$
and also a recent classification~\cite{ejs19}
of vertex-transitive cubic graphs of girth at most $5$.

Unless explicitly stated otherwise, by a {\em graph}, we will always mean a finite simple graph,
defined as a pair $(V,\sim)$ where
$V$ is the {\em vertex-set} and $\sim$ an irreflexive symmetric {\em adjacency relation} on $V$.

However, in Section~\ref{ssec:trunc} it will be convenient to allow graphs possessing parallel edges;
details will be explained there.
Finally, in Section~\ref{ssec:maps}, when considering embeddings of graphs onto surfaces, we will intuitively think of a graph in a topological context as a $1$-dimensional CW complex. See that section for details.

\section{An upper bound on the signature}
\label{sec:bound}

This section is devoted to the proof of Theorems~\ref{the:1},~\ref{the:2} and~\ref{the:3}
that give an upper bound on
the number of girth cycles through an edge in a girth-regular graph and in some cases
 characterise the graphs attaining this bound.

\subsection{Moore graphs and generalised $n$-gons}

We begin by a well-known result that sets a lower bound on the number of vertices
for a $k$-regular graph of finite girth $g$.

\begin{proposition}
\label{prop:vertbound}
(Tutte~\cite[8.39]{t66}, cf.~Brouwer, Cohen \& Neumaier~\cite[\S 6.7]{bcn89})
Let $\G$ be a $k$-regular graph with $n$ vertices and finite girth $g \ge 2$. Let
$d = \lfloor g/2 \rfloor$.
Then
\begin{equation} \label{eqn:vertbound}
\pushQED{\qed}
n \ge \begin{cases}
1 + k \sum_{j=0}^{(g-3)/2} (k-1)^j = \frac{k(k-1)^d - 2}{k-2}& \text{if $g$ is odd}, \\
2 \sum_{j=0}^{(g-2)/2} (k-1)^j = 2 \frac{(k-1)^d-1}{k-2} & \text{if $g$ is even}.
\end{cases}
\qedhere
\popQED
\end{equation}
\end{proposition}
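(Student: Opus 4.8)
The plan is to prove the classical Moore bound by a standard counting argument that examines the ball of radius $d$ around a fixed vertex (in the odd case) or around a fixed edge (in the even case), and then show that the girth condition forces all the vertices encountered in this ball to be distinct. Fix a vertex $v$ of $\G$ and consider performing a breadth-first search from $v$. The key observation is that if $g$ is the girth, then no two distinct paths of length at most $d = \lfloor g/2 \rfloor$ starting at $v$ can terminate at the same vertex, for otherwise they would close up into a cycle of length strictly less than $g$, contradicting the definition of girth.

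First I would treat the \emph{odd} case $g = 2d+1$. Here I count vertices at distance exactly $j$ from $v$ for $j = 0, 1, \ldots, d$. There is exactly $1$ vertex at distance $0$, namely $v$ itself; there are exactly $k$ vertices at distance $1$, since $\G$ is $k$-regular; and for each subsequent level, every vertex at distance $j$ (for $1 \le j \le d-1$) has exactly one neighbour back towards $v$ and hence at most $k-1$ neighbours at distance $j+1$. The crucial point is that the girth assumption makes all these vertices genuinely distinct and makes each such vertex contribute exactly $k-1$ new neighbours: if two branches met, or if a vertex at level $j+1$ had two neighbours at level $j$, we would again produce a cycle of length at most $2d < g$. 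Summing over the levels gives
\begin{equation*}
n \ge 1 + k\sum_{j=0}^{d-1}(k-1)^j = 1 + k\sum_{j=0}^{(g-3)/2}(k-1)^j,
\end{equation*}
and the closed form follows from the geometric series identity $\sum_{j=0}^{d-1}(k-1)^j = \frac{(k-1)^d-1}{k-2}$.

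Next I would treat the \emph{even} case $g = 2d$. The idea is the same but the root of the search is an \emph{edge} rather than a vertex: fix an edge $\{u,w\}$ and grow two trees, one rooted at $u$ and one at $w$, counting vertices by their distance to the nearer endpoint. At distance $0$ there are the two vertices $u,w$; each then branches into $k-1$ new neighbours, and so on, so that each of the two sides contributes $\sum_{j=0}^{d-1}(k-1)^j$ vertices. Here the girth bound $g=2d$ guarantees that the two trees do not meet and that no vertex is counted twice up to distance $d-1$ from its endpoint, since any such coincidence would create a cycle of length at most $2d-1 < g$ (or use the edge $\{u,w\}$ to close a cycle shorter than $2d$). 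This yields
\begin{equation*}
n \ge 2\sum_{j=0}^{d-1}(k-1)^j = 2\sum_{j=0}^{(g-2)/2}(k-1)^j = 2\,\frac{(k-1)^d-1}{k-2},
\end{equation*}
as required.

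The main obstacle, and the step that deserves the most care, is justifying the distinctness of all vertices encountered in the search up to the relevant radius: one must argue precisely that a repeated vertex or an extra back-edge produces a closed walk, then extract from that walk an actual cycle of length less than $g$. This requires being careful about the parity of the resulting cycle and about whether the offending edge lies inside a single tree or bridges the two trees in the even case. Once this distinctness is established, the remaining steps are purely the geometric-series evaluation, which is routine. Since this is a well-known result (attributed to Tutte), I would present the counting argument concisely and devote the bulk of the exposition to the girth-forces-distinctness step, as that is where the hypothesis $g \ge 2$ and the definition of girth are actually used.
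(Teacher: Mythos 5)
Your argument is correct: it is the standard Moore/Tutte counting proof, growing a distance tree from a vertex (odd girth) or from an edge (even girth) and using the girth hypothesis to force all vertices within radius $d$, respectively $d-1$, to be distinct. Note, however, that the paper does not prove this proposition at all --- it is stated as a known result with the \(\qed\) placed directly after the displayed bound and attributed to Tutte and to Brouwer, Cohen and Neumaier --- so there is no in-paper argument to compare yours against; your write-up simply supplies the classical proof being cited. The only cosmetic caveat is that the closed forms with denominator $k-2$ require $k\ge 3$ (for $k=2$ one must fall back on the summation forms), but this issue is already present in the statement itself and does not affect your argument.
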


\begin{note}
\label{note:prop:vertbound}
Let $\G$ be a $k$-regular graph of girth $g$
for which equality holds in \eqref{eqn:vertbound}.
If $g$ is odd, then such an extremal graph is called a {\em Moore graph}.
It is well known (see~\cite{d73} or~\cite{bi73}, for example)
 that a Moore graph is either a complete graph, an odd cycle,
or has girth $5$ and valence $k \in \{3, 7, 57\}$.
Of the latter, the first two cases uniquely determine the Petersen graph
and the Hoffman-Singleton graph, respectively,
while no example is known for $k = 57$.

If the girth $g$ is even, then $\G$ is an incidence graph
of a generalised $(g/2)$-gon of order $(k-1, k-1)$
(see~\cite{t59} or~\cite[\S 6.5]{bcn89}, for example).
For $k = 2$, we have ordinary polygons,
and their incidence graphs are even cycles.
For $k \ge 3$, such generalised $(g/2)$-gons
only exist if $g/2 \in \{2, 3, 4, 6\}$ (see~\cite[Theorem~1]{fh64}).
In particular, if $k=3$,
then $\G$ is the incidence graph of a generalised $d$-gon of order $(2, 2)$,
where $d \in \{2, 3, 4, 6\}$.
For $d = 2$, this is a geometry with three points incident to three lines,
so its incidence graph is $K_{3,3}$.
For $d = 3$, we get the Fano plane,
and its incidence graph is the Heawood graph,
which is the unique cubic arc-transitive graph on $14$ vertices.
For $d = 4$, there is a unique generalised quadrangle of order $(2, 2)$,
cf.~Payne \& Thas~\cite[5.2.3]{pt09},
and its incidence graph is the Tutte-Coxeter graph,
also known as the Tutte $8$-cage,
which is the unique connected cubic arc-transitive graph on $30$ vertices.
For $d = 6$, there is a unique dual pair
of generalised hexagons of order $(2, 2)$,
cf.~Cohen \& Tits~\cite{ct85},
and their incidence graph (on 126 vertices), also known as the Tutte $12$-cage, is not vertex-transitive.
However, the latter graph is edge-transitive,
making it {\em semisymmetric}
-- in fact, it is the unique cubic semisymmetric graph on $126$ vertices
(see~\cite{cmmp06}, where this graph is denoted by S126).
\end{note}

\subsection{Proof of Theorems~\ref{the:1},~\ref{the:2} and~\ref{the:3}}

Equipped with these facts, we are now ready to prove Theorems~\ref{the:1},~\ref{the:2} and~\ref{the:3}.
Let us thus assume that $\G$ is a simple connected girth-regular graph of valence $k\ge 3$, let $g$ be its girth and let
$(a_1, a_2, \ldots, a_k)$ be its signature. Set $d = \lfloor g/2 \rfloor$.

In order to prove Theorem~\ref{the:1}, we need to show that
$a_k \le (k-1)^d$, or equivalently, that $\epsilon(e) \le (k-1)^d$ for every edge $e$ of $\G$.

For an integer $i$ and a vertex $v$ of $\G$, let $S_i(v)$ denote the set of vertices of $\G$ that are at distance $i$ from $v$,
and for an edge $uv$ of $\G$, let $D^i_j(u,v) = S_i(u) \cap S_j(v)$.
If $i$ and $j$ are integers such that $|i-j|\ge 2$, then clearly $D^i_j(u,v) = \emptyset$.

Now let $uv$ be an arbitrary edge of $\G$ and let $i\in \{2,\ldots,d\}$.
For simplicity, let $D_j^i=D_j^i(u,v)$. If $A$ and $B$ are two sets of vertices of $\G$, let
$E(A,B)$ be the set of edges with one end-vertex in $A$ and the other in $B$.
Since $g\ge 2d$, the following facts can be easily deduced:
\begin{enumerate}[(1)]
\item \label{f1} $D_i^i = \emptyset$ if $i \le d-1$;
\item \label{f2} each of $D^{i-1}_{i}$ and $D_{i-1}^{i}$ is an independent set;
\item \label{f3} each vertex in $D^{i-1}_{i}$
has precisely one neighbour in $D^{i-2}_{i-1}$, and if $i\le d-1$,
precisely $k-1$ neighbours in $D^{i}_{i+1}$;
\pitem \label{f3p} each vertex in $D^{i}_{i-1}$
has precisely one neighbour in $D^{i-1}_{i-2}$, and if $i\le d-1$,
precisely $k-1$ neighbours in $D^{i+1}_{i}$;
\item \label{f4} $|D_{i-1}^{i}| = |D_{i}^{i-1}| = (k-1)^{i-1}$;
\item \label{f5} if $g$ is even, then $\epsilon(uv) = |E(D_d^{d-1},D^d_{d-1})|$;
\item \label{f6} if $g$ is odd, then every vertex in $D_d^d$ has precisely one neighbour in each of the sets $D^{d-1}_{d}$, $D_{d-1}^{d}$ and
               $\epsilon(uv) = |D_d^d|$.
\end{enumerate}

Henceforth, let $uv$ be an arbitrary edge of $\G$ such that $\epsilon(uv) = a_k$ and let $D_i^j = D_i^j(u, v)$.
The structure of $\G$ with respect to the sets $D_i^j$ is then depicted in Figure~\ref{fig:cmax}.

\begin{figure}[t]
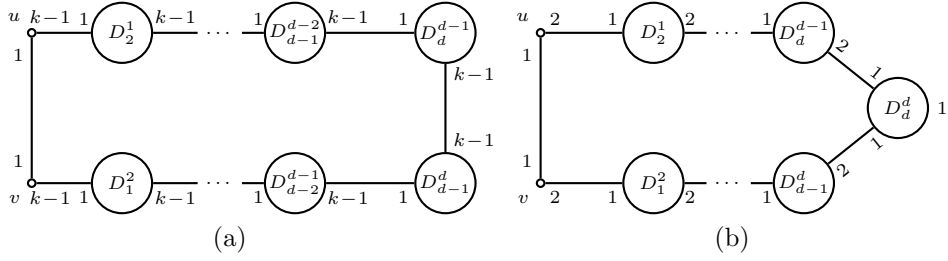

\makebox[\textwidth][c]{
\begin{tabular}{C{0.5}C{0.5}}
\leavevmode\beginpgfgraphicnamed{fig-cmaxeven}\input{tikz/cmaxeven.tikz}\endpgfgraphicnamed
&
\leavevmode\beginpgfgraphicnamed{fig-cmaxodd}\input{tikz/cmaxodd.tikz}\endpgfgraphicnamed
\\
(a) & (b)
\end{tabular}
}
\caption{The partitions of the vertices of $\G$ of girth $g$
corresponding to an edge $uv$ lying on $(k-1)^d$ girth cycles,
where $d = \lfloor g/2 \rfloor$.
(a) shows the case when $g$ is even,
while (b) shows the case when $\G$ is cubic and $g$ is odd.
The sets $D^i_j$ with $i+j < 2d$ are independent sets,
while the set $D^d_d$ in the odd case induces a perfect matching.}
\label{fig:cmax}
\end{figure}

Suppose first that $g$ is even. Let
$$
D=\bigcup_{i = 1}^d (D_{i-1}^{i} \cup D_{i}^{i-1})
$$
and observe that all of the vertices in $D$,
except possibly those in $D^{d-1}_{d}$ and $D_{d-1}^{d}$,
have all of their neighbours contained in $D$.
By \eqref{f4}, we see that
$$
|D| = 2(1+(k-1) + \ldots + (k-1)^{d-1}) = 2 \frac{(k-1)^d-1}{k-2}.
$$
Moreover, it follows from (\ref{f1}--\ref{f5}) that
$$a_k = \epsilon(uv) = |E(D_d^{d-1},D^d_{d-1})| \le (k-1) |D_d^{d-1}| = (k-1)^d.$$
 This proves Theorem~\ref{the:1} in the case when $g$ is even.
 (The case when $g$ is odd will be considered later.)

To prove Theorem~\ref{the:2}, assume that $a_k = (k-1)^d$.
Then equality holds in the above equation,
implying that $|E(D_d^{d-1},D^d_{d-1})| = (k-1) |D_d^{d-1}|$,
which means that each vertex in $D_d^{d-1}$
has $k-1$ neighbours within $D^d_{d-1}$.
This implies that every vertex from the set $D$
has all of its neighbours contained in $D$,
and by connectivity of $\G$, we see that $V(\G) = D$.
But then by Proposition~\ref{prop:vertbound}
and Note~\ref{note:prop:vertbound},
the graph $\G$ is the incidence graph
of a generalised $g/2$-gon of order $(k-1,k-1)$.
If, in addition, $k=3$ holds,
then $\Gamma$ is one of the graphs
mentioned in the statement of Theorem~\ref{the:2}.
This proves Theorem~\ref{the:2}.

Let us now move to the case where $g$ is odd, prove Theorem~\ref{the:3} and finish the proof of Theorem~\ref{the:1}.
Suppose henceforth that $g$ is odd.
Even though Theorem~\ref{the:3} is only about cubic graphs,
we will try to continue the proof without this assumption for as long as we can.
Let
$$
D=D_d^d \cup \bigcup_{i = 1}^d (D_{i-1}^{i} \cup D_{i}^{i-1})
$$
and observe that
$$
|D| \le (k-1)^d + 2(1+(k-1) + \ldots + (k-1)^{d-1}) = \frac{k(k-1)^d-2}{k-2}.
$$
If we prove that every vertex in $D$ has all of its neighbours contained in $D$, the connectivity of $\G$ will imply that $V(\G) = D$.
But then Proposition~\ref{prop:vertbound} will imply that $\G$ is
a Moore graph. Since the only cubic Moore graphs are $K_4$ and the Petersen graph,
this will then imply Theorem~\ref{the:3}.

Note that by \eqref{f2}, \eqref{f3} and \eqref{f3p},
it follows that the neighbourhoods of all vertices,
except possibly those contained in $D^{d-1}_{d}$, $D_{d-1}^{d}$ or $D_d^d$,
are contained in $D$.
By \eqref{f3}, \eqref{f3p} and \eqref{f6},
it follows that $|D_d^d| \le (k-1)|D_d^{d-1}| = (k-1)^d$,
and by \eqref{f6} we see that
$$
 a_k = \epsilon(uv) = |D_d^d| \le (k-1)^d,
$$
thus proving Theorem~\ref{the:1} also for the case when $g$ is odd.

Assume now that $a_k = (k-1)^d$. Then, by \eqref{f6}, $|D_d^d| = (k-1)^d$,
implying that every vertex in $D_d^{d-1}$ (as well as in $D^d_{d-1}$)
has $k-1$ neighbours in $D_d^d$, and thus none outside the set $D$.
To prove Theorem~\ref{the:3}, it thus suffices to show that every vertex from $D_d^d$ has all of its neighbours in $D$.

Since every vertex in $D_{d+1}^{d}$ or $D_d^{d+1}$ has to have at least one neighbour in $D_d^{d-1}$ or $D_{d-1}^{d}$, respectively, and
since all of the neighbours of vertices in the latter two sets lie in $D_d^d$, $D_{d-1}^{d-2}$ and $D_{d-2}^{d-1}$, it follows that
the sets $D_{d+1}^{d}$ and $D_d^{d+1}$ are empty.
By consequence, the sets $D_i^{i+1}(u,v)$ and $D_{i+1}^{i}(u,v)$ for $i\ge d$
are also empty.
Let us summarise that in Lemma~\ref{lem:7}.

\begin{lemma}
\label{lem:7}
Let $\G$ be a girth-regular graph of girth $2d+1$
and signature $(a_1,\ldots,a_k)$ such that $a_k = (k-1)^d$.
If $uv$ is an edge of $\G$ such that $\epsilon(uv) = a_k$,
then for $i\ge d$ the sets $D_i^{i+1}(u,v)$ and $D_{i+1}^{i}(u,v)$ are empty.
\end{lemma}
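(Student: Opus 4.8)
The plan is to convert the extremal hypothesis $a_k=(k-1)^d$ into a \emph{saturation} statement for the two boundary layers $D_d^{d-1}=S_{d-1}(u)\cap S_d(v)$ and $D_{d-1}^d=S_d(u)\cap S_{d-1}(v)$, and then to push emptiness outward by a short distance argument. For the saturation I would reuse the counting already set up above: by \eqref{f6} we have $|D_d^d|=\epsilon(uv)=(k-1)^d$, and each vertex of $D_d^d$ sends exactly one edge into $D_d^{d-1}$. Since $|D_d^{d-1}|=(k-1)^{d-1}$ by \eqref{f4} while each of its vertices already uses one of its $k$ edges on its unique neighbour in $D_{d-1}^{d-2}$ (by \eqref{f3}), it can absorb at most $k-1$ of those edges; as $(k-1)\cdot(k-1)^{d-1}=(k-1)^d$, the bound is met with equality everywhere. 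Hence every vertex of $D_d^{d-1}$ has exactly $k-1$ neighbours in $D_d^d$ together with a single neighbour in $D_{d-1}^{d-2}$, so all of its neighbours lie in $D$; by the symmetry exchanging $u$ and $v$ the same holds for $D_{d-1}^d$.

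With this sealed boundary in place, I would first settle the case $i=d$. Suppose toward a contradiction that some $z$ lies in $D_{d+1}^d=S_d(u)\cap S_{d+1}(v)$. As $z\in S_d(u)$ it has a neighbour $w\in S_{d-1}(u)$; being adjacent to $z\in S_{d+1}(v)$ forces $w$ to lie at distance at least $d$ from $v$, while $w\in S_{d-1}(u)$ and $u\sim v$ force it to lie at distance at most $d$ from $v$, so $w\in S_{d-1}(u)\cap S_d(v)=D_d^{d-1}$. But the previous paragraph shows that every neighbour of $w$ lies in $D$, whereas $z\notin D$ — a contradiction. Hence $D_{d+1}^d=\emptyset$, and $D_d^{d+1}=\emptyset$ follows by the $u\leftrightarrow v$ symmetry.

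Finally I would extend this to all $i\ge d$ by induction, the base $i=d$ being the step just completed. Assuming $D_{j+1}^j=D_j^{j+1}=\emptyset$ for some $j\ge d$, any $z\in D_{j+2}^{j+1}=S_{j+1}(u)\cap S_{j+2}(v)$ would have a neighbour $w\in S_j(u)$, and the same two distance inequalities pin $w$ to $S_j(u)\cap S_{j+1}(v)=D_{j+1}^j$, which is empty — so $D_{j+2}^{j+1}=\emptyset$, and $D_{j+1}^{j+2}=\emptyset$ symmetrically. The one genuine obstacle is the saturation step: everything rests on the exact arithmetic equality $(k-1)\cdot(k-1)^{d-1}=(k-1)^d$ forcing each boundary vertex to spend all $k$ of its edges inside $D$; once that seal is established, the emptiness of the outer layers is pure distance bookkeeping and the induction is routine.
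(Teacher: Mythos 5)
Your proposal is correct and follows essentially the same route as the paper: the equality $|D_d^d|=(k-1)^d$ from \eqref{f6} combined with the count $|D_d^{d-1}|=(k-1)^{d-1}$ forces each vertex of $D_d^{d-1}$ (and of $D^d_{d-1}$) to spend all $k$ of its edges inside $D$, after which any vertex of $D_{d+1}^d$ would have to be a neighbour of such a saturated vertex, giving the base case, with the cases $i>d$ following by the same distance bookkeeping (the paper compresses your induction into a single ``by consequence''). No gaps.
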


Suppose now that $V(\G) \not= D$.
Then a vertex $y\in D_d^d$ has a neighbour $w$ outside $D$.
Since the girth of $\G$ is $2d+1$,
there exists a unique path of length $d$ from $y$ to $u$.
Let $v'$ be the neighbour of $u$ through which this path passes,
and let $u'$ be a neighbour of $v'$ other than $u$
such that $\epsilon(v'u') = \epsilon(uv)$.
Let $E^i_j = D^i_j(u',v')$ and observe that by Lemma~\ref{lem:7},
the sets $E_d^{d+1}$ and $E_{d+1}^{d}$ are empty.
Furthermore, since $w$ is not in $D$ but has a neighbour $y$ in $D$,
we see that $d(w,u) = d+1$, implying that $w\in D_{d+1}^{d+1}$.

We shall now partition the set $D_d^{d-1}$ with respect to the distance to the vertices $v'$ and $u'$.
In particular, we will show that $D_d^{d-1}$ is a disjoint union of the sets
\begin{align*}
X &= D_d^{d-1} \cap E^{d-3}_{d-2}, \\
Y &= D_d^{d-1} \cap E^{d-1}_{d-2}, \\
Z &= D_d^{d-1} \cap E^{d}_{d}.
\end{align*}

To prove this, note first that a vertex in $D_d^{d-1}$
is at distance $d-1$ from $u$ and thus by \eqref{f1},
it is either at distance $d-2$ or $d$ from $v'$.
Furthermore, those vertices that are at distance $d-2$ from $v'$
are either at distance $d-3$ or $d-1$ from $u'$,
and therefore belong to $X$ or $Y$.
Now let $x$ be an element of $D_d^{d-1}$ that is at distance $d$ from $v'$. Since $E_d^{d+1} = \emptyset$, this implies
that $x$ is either in $E_d^{d-1}$ or in $E_d^{d}$. If $x\in E_d^{d-1}$, then there exist two distinct paths of length $d$ from $x$ to $v'$, one
passing through $u$ and one passing through $u'$, yielding a cycle of length at most $2d$, which is a contradiction. Hence $x\in E_d^{d}$, and therefore $x\in Z$.

We will now determine the sizes of $X$, $Y$ and $Z$. In particular, we will show that:
\begin{align*}
|X| &= (k-1)^{d-3}, \\
|Y| &= (k-2)(k-1)^{d-3}, \\
|Z| &= (k-2)(k-1)^{d-2}.
\end{align*}

To prove the first equality,
observe that $X$ consists of all the ends of paths of length $d-2$
that start with $v'u'$.
The equality for $|X|$ then follows from the fact
that there are $(k-1)^{d-3}$ such paths.
Further, note that $Y$ consists of all the ends of paths of length $d-2$
that start in $v'$ but do not pass through $u'$ or $u$.
There are $(k-2)(k-1)^{d-3}$ such paths, proving the equality for $|Y|$.
Finally, to prove the equality for $|Z|$,
observe that $Z$ consists of all the ends of paths of length $d-1$
that start in $u$ but do not pass through $v$ or $v'$;
there are clearly $(k-2)(k-1)^{d-2}$ such paths.

We will now partition the set $D_d^d$ into sets $X'$, $Y'$ and $Z'$ defined as follows. Let $x$ be a vertex of $D_d^d$ and observe that there is a unique path from $x$ to $u$ of length $d$. If this path passes through $X$,
then we let $x\in X'$, if it passes through $Y$, then we let $x\in Y'$, and if it passes through $Z$, we let $x\in Z'$.

Since each vertex in $D_d^{d-1}$ has $k-1$ neighbours in $D_d^d$ and each vertex in $D_d^d$ has precisely one neighbour in $D_d^{d-1}$, we see that
\begin{alignat*}{2}
|X'| &= (k-1)|X| &&= (k-1)^{d-2}, \\
|Y'| &= (k-1)|Y| &&= (k-2)(k-1)^{d-2}, \\
|Z'| &= (k-1)|Z| &&= (k-2)(k-1)^{d-1}.
\end{alignat*}

Observe furthermore that a vertex $x$ in $X'$, having a neighbour in $X$,
is at distance at most $d-2$ from $u'$,
but since it is at distance $d$ from $u$,
it is at distance exactly $d-2$ from $u'$.
Similarly, $d(x,v') \le d-1$ and since $d(x,u) = d$,
we see that $d(x,v') = d-1$.
In particular, $x\in E_{d-1}^{d-2}$ and thus
$$
 X' = D_d^d\cap E_{d-1}^{d-2} = E_{d-1}^{d-2}.
$$
A similar argument shows that
$$
 Y' = D_d^d\cap E_{d-1}^{d}.
$$
Let us now consider the set $Z'$,
and in particular the intersection $A = Z' \cap E_d^{d-1}$.
Note that each vertex in $Z$ must have at least one neighbour in $A$,
for otherwise it could not be at distance $d$ from $u'$.
This implies that $|A| \ge |Z| = (k-2)(k-1)^{d-2}$.
On the other hand, for a similar reason,
each vertex in $A$ must have a neighbour in $X'$.
By comparing the sizes of $A$ and $X'$,
we may thus conclude that every vertex in $X'$ has $k-2$ neighbours in $A$
and each vertex in $A$ has precisely one neighbour in $X'$.
In particular, every vertex in $X'$ has all of its neighbours in $D$,
and consequently, the vertex $w$ has no neighbours in $X'$.
Therefore, we have $y \in Y'$.
Now recall that $w\in D_{d+1}^{d+1}$, implying that $d(w,v') \ge d$.
On the other hand, $w$ has a neighbour in $Y'$,
which is a subset of $E_{d-1}^d$, implying that $d(w,v') = d$.
Since $E_{d}^{d+1} = \emptyset$, it follows that $w\in E_d^d$,
and hence there exists a path $wz_1z_2 \ldots z_{d-1}u'$
of length $d$ from $w$ to $u'$.
By considering possibilities for such a path,
one can now easily see that $z_1 \in Z'$ and $z_2\in X'$.
But then $z_1$ has at least four neighbours:
$z_2$, $w$, a neighbour in $Z$, and a neighbour in $D^d_{d-1}$,
see Figure~\ref{fig:codd}.
This contradicts our assumption that the valence $k$ is $3$.
This contradiction shows that $V(\Gamma) = D$,
and thus completes the proof of Theorem~\ref{the:3}.

\begin{figure}[t]
\makebox[\textwidth][c]{
\leavevmode\beginpgfgraphicnamed{fig-cmaxodd-proof}\input{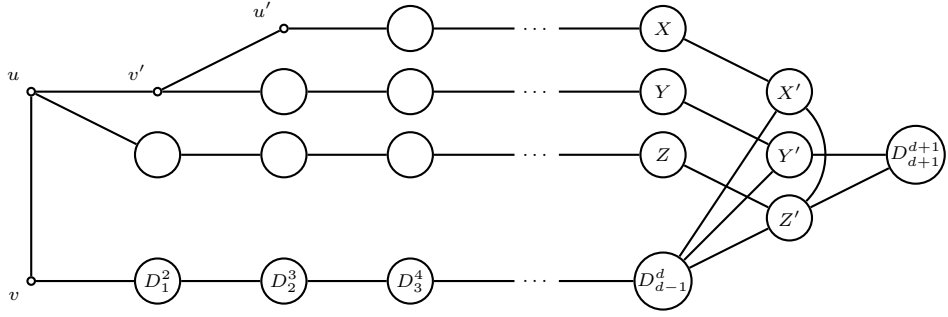}\endpgfgraphicnamed
}
\caption{The partitions of the vertices of $\G$ of girth $g$,
where $g$ is odd,
corresponding to the edges $uv$ and $u'v'$,
both lying on $2^d$ girth cycles, where $d = \lfloor g/2 \rfloor$.
Assuming there is a vertex $w \in D^{d+1}_{d+1}$,
we show that $w \in E^d_d$ has a neighbour in $Z'$,
which in turn must have at least four neighbours.}
\label{fig:codd}
\end{figure}

\section{Cubic girth-regular graphs} \label{sec:cgr}

Let us now turn our attention to cubic girth-regular graphs.
After proving a few auxiliary lemmas,
we will characterise cubic girth-regular graphs
of some specific signatures.
As an application of our analysis,
we provide a characterisation of all cubic girth-regular graphs
of girth at most $5$ in Sections~\ref{sec:g1234} and~\ref{sec:g5}.

\subsection{Auxiliary results}
\label{sec:cvt}

\begin{lemma} \label{lem:eventriineq}
If $(a, b, c)$ is the signature of a cubic girth-regular graph $\G$ of girth $g$, then:
\begin{enumerate}
\item \label{lem:eventriineq:1} $a+b+c$ is even,
\item \label{lem:eventriineq:2} $a+b \ge c$, and
\item \label{lem:eventriineq:3} if $a \ge 1$ and $c = a+b$, then $g$ is even.
\end{enumerate}
\end{lemma}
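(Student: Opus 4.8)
The plan is to analyze the local structure around a single vertex $v$ by tracking how the girth cycles through $v$ distribute themselves among the three edges $e_1, e_2, e_3$ incident to $v$, where $\epsilon(e_1)=a$, $\epsilon(e_2)=b$, $\epsilon(e_3)=c$. The crucial observation is that each girth cycle passing through $v$ uses exactly two of the three edges at $v$, so it is counted once by each of those two edges. Thus if we let $n_{ij}$ denote the number of girth cycles through $v$ using edges $e_i$ and $e_j$, we get the system $a = n_{12}+n_{13}$, $b = n_{12}+n_{23}$, $c = n_{13}+n_{23}$, where the $n_{ij}$ are nonnegative integers.

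For part~\eqref{lem:eventriineq:1}, I would simply sum these three equations to obtain $a+b+c = 2(n_{12}+n_{13}+n_{23})$, which is manifestly even. For part~\eqref{lem:eventriineq:2}, I observe that $a+b = n_{13}+n_{23}+2n_{12} = c + 2n_{12} \ge c$ since $n_{12}\ge 0$; this is the triangle-inequality-type bound, and by symmetry the analogous inequalities $a+c\ge b$ and $b+c\ge a$ also hold, though only the one in the statement is needed. These two parts are essentially bookkeeping once the counting identities are set up correctly, so the key initial step is to justify carefully that every girth cycle through $v$ meets $v$ in exactly two incident edges (which holds because a cycle visits each of its vertices exactly once, entering and leaving along two distinct edges).

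The substantive part is~\eqref{lem:eventriineq:3}: assuming $a\ge 1$ and $c = a+b$, I must show $g$ is even. From $c=a+b$ together with the decomposition, $n_{13}+n_{23} = (n_{12}+n_{13})+(n_{12}+n_{23})$, which forces $n_{12}=0$. So no girth cycle through $v$ uses both $e_1$ and $e_2$; every girth cycle at $v$ uses $e_3$ together with exactly one of $e_1, e_2$. Since $a\ge 1$, there is at least one girth cycle using $e_1$ (and hence $e_3$). The plan is to take such a girth cycle $C$ and argue it must have even length. Writing $e_1 = vu_1$ and $e_3 = vu_3$, the cycle $C$ starts at $v$, goes to $u_1$ and $u_3$, and the two arcs of $C$ from $u_1$ and from $u_3$ meet at a vertex $w$ diametrically opposite $v$ on $C$. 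If $g$ were odd, $g=2d+1$, the two arcs would have lengths $d$ and $d+1$, so the antipodal ``vertex'' is really an antipodal \emph{edge} $e=ww'$ lying on $C$.

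The hard part will be deriving a contradiction in the odd case, and this is where I expect the main obstacle to lie. The idea I would pursue is a symmetry/switching argument on the antipodal edge. Because $n_{12}=0$ holds at \emph{every} vertex (by girth-regularity, every vertex has signature $(a,b,c)$ with $c=a+b$, so the same vanishing argument applies), there should be a global constraint: the edges can be two-coloured or oriented consistently according to whether a girth cycle traverses them as part of an ``$e_1$-side'' or ``$e_2$-side'', and an odd girth cycle would create a parity conflict around such a cycle. Concretely, I would try to show that the relation ``$e$ and $e'$ are opposite edges at a common vertex in some girth cycle'' propagates a consistent bipartition-like labelling that an odd cycle cannot support. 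Making this switching argument rigorous — ensuring the labelling is well-defined and that traversing an odd girth cycle returns an inconsistent value — is the delicate step; the even case imposes no such obstruction because the antipode is a vertex rather than an edge, allowing the labels to close up consistently.
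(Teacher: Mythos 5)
Your parts (1) and (2) are correct and essentially identical to the paper's argument: the paper also writes $a=x+z$, $b=x+y$, $c=y+z$ for the numbers of girth cycles through the three $2$-paths at a vertex, sums to get parity, and solves for $x=(a+b-c)/2\ge 0$.

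For part (3), however, there is a genuine gap. You correctly derive the key local fact that $n_{12}=0$ at every vertex, i.e.\ no girth cycle uses the two edges of smallest $\epsilon$-value at any vertex. But you then propose to finish via an antipodal-edge ``switching'' argument, labelling edges by whether a girth cycle traverses them on the ``$e_1$-side'' or ``$e_2$-side'', and you yourself flag that making this labelling consistent is the main obstacle. That labelling is indeed problematic (it depends on a choice of cycle and of which of $e_1,e_2$ is which), and it is also unnecessary. The point you are missing is that there is already a canonical, globally well-defined two-colouring of the edges: since $a\ge 1$ and $c=a+b$, we have $c>b\ge a$, so at every vertex exactly one incident edge $e$ satisfies $\epsilon(e)=c$ (call it \emph{saturated}) and the other two are \emph{unsaturated}; this colouring is determined edge by edge by the value of $\epsilon$ and requires no consistency check. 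Your fact $n_{12}=0$ says precisely that no girth cycle contains two consecutive unsaturated edges; and since each vertex has only one saturated edge, no girth cycle contains two consecutive saturated edges either. Hence saturated and unsaturated edges strictly alternate along every girth cycle, which forces $g$ to be even --- no case analysis on where the antipode of $v$ falls is needed. This is exactly how the paper concludes.
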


\begin{proof}
Let $u$ be a vertex of $\G$ and let
and $e_1, e_2$ and $e_3$ be the three edges
incident to $u$, lying on $a, b$ and $c$ $g$-cycles, respectively.
Further, let $x, y, z$ be the number of $g$-cycles
the $2$-paths $e_1e_2, e_2 e_3$ and $e_3e_1$ lie on,
respectively.
Clearly, we have $a = x+z$, $b = x+y$ and $c = y+z$.
Then $a+b+c = 2(x+y+z)$, showing that this sum is even.

Further we may express $x = (a+b-c)/2$, $y = (-a+b+c)/2$ and $z = (a-b+c)/2$.
Since these numbers are nonnegative, it follows that $a + b \ge c$.

Now suppose that $a \ge 1$ and $c = a+b$.
Let us call an edge $e$ with $\epsilon(e) = c$ {\em saturated} and others {\em unsaturated}.
Note that $c>b$, implying that $e_1$ and $e_2$ are unsaturated while $e_3$ is saturated.
Since $y+z = c = a+b = 2x+y+z$, we see that $x=0$. Since $u$ was an arbitrary vertex of $\G$,
this shows that a $2$-path in $\G$ consisting of two unsaturated edges belongs to no $g$-cycles.
In particular, when traversing a $g$-cycle in $\G$, saturated and unsaturated edges must alternate,
implying that $g$ is even.
\end{proof}

\begin{lemma}
\label{lem:azero}
If the signature of a cubic girth-regular graph is $(0, b, c)$, then $b = c = 1$.
\end{lemma}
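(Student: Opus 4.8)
The plan is to exploit the vanishing first entry of the signature, which pins down the local structure at every vertex. First I would reintroduce the quantities $x,y,z$ used in the proof of Lemma~\ref{lem:eventriineq}: at a vertex $u$ with incident edges $e_1,e_2,e_3$ lying on $0$, $b$, and $c$ girth cycles respectively, let $x,y,z$ be the numbers of girth cycles through the $2$-paths $e_1e_2$, $e_2e_3$, $e_3e_1$. Then $0 = a = x+z$, and nonnegativity forces $x = z = 0$. Consequently $b = x+y = y$ and $c = y+z = y$, so $b = c = y$; more importantly, the only $2$-path through $u$ carrying any girth cycle at all is $e_2e_3$, the path formed by the two non-zero edges.

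Next I would read off the global structure. Since each vertex is incident to exactly one edge of $\epsilon$-value $0$, these zero-edges form a perfect matching $M$, and the complement $H = \Gamma - M$ is a $2$-regular spanning subgraph, hence a disjoint union of cycles. The crucial observation is that a girth cycle can never traverse a zero-edge: at each of its vertices it induces a $2$-path that lies on at least one girth cycle (namely itself), and by the previous paragraph the only such $2$-path is $e_2e_3$. Thus every girth cycle is a cycle contained in $H$.

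Finally I would conclude by a containment-and-counting argument. A cycle sitting inside a vertex-disjoint union of cycles must coincide with one whole connected component of $H$; therefore distinct girth cycles are edge-disjoint, and every edge of $H$ lies on at most one girth cycle. Since $\Gamma$ has finite girth, at least one girth cycle exists, so some edge carries a positive $\epsilon$-value; as the zero-edges carry none, that edge lies in $H$ and its $\epsilon$-value is $b = c \ge 1$. Combining the upper bound (at most one girth cycle per $H$-edge) with this lower bound yields $b = c = 1$.

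The main point to get right is the claim that girth cycles avoid the matching $M$ entirely; once this is in hand, the reduction to the component structure of the $2$-regular graph $H$ makes the conclusion essentially automatic. I do not anticipate a serious obstacle beyond carefully justifying that a cycle lying inside a vertex-disjoint union of cycles is forced to be an entire component, so that each $H$-edge meets at most one girth cycle.
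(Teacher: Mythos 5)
Your argument is correct, but it takes a more structural route than the paper, which disposes of the lemma in three lines: there, $b=c$ comes directly from part (2) of Lemma~\ref{lem:eventriineq} (since $a=0$ and $b\le c$ force $b=c$), and $b\le 1$ follows by observing that two distinct girth cycles through a common edge must diverge at some vertex $u$, at which point all three edges incident to $u$ would lie on a girth cycle, contradicting $a=0$. You instead extract from $x=z=0$ the fact that the zero-edges form a matching $M$ whose complement $H$ is a $2$-factor, that every girth cycle is a full component of $H$, and hence that distinct girth cycles are edge-disjoint; this is a valid global version of the same local observation, and it has the bonus of already exhibiting the decomposition into the matching and the girth cycles that the paper only introduces later, in the proof of Theorem~\ref{thm:azero}. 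One small ordering issue: you assert that each vertex is incident to \emph{exactly one} zero-edge (so that $M$ is a perfect matching and $H$ is $2$-regular) before you have ruled out $b=c=0$; you should first note that $b=c=0$ would force every edge to lie on no girth cycle, contradicting the existence of a girth cycle in a finite cubic graph, and only then conclude $b=c\ge 1$ and read off the matching structure. With that rearrangement the proof is complete.
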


\begin{proof}
Let $\G$ be a cubic girth-regular graph with signature $(0, b, c)$ and let $g$ be its girth.
By part \eqref{lem:eventriineq:2} of Lemma~\ref{lem:eventriineq}, it follows that $b = c$.
Suppose that $b > 1$.
Let $e$ be an edge of $\G$ lying on $b$ $g$-cycles,
and let $C, C'$ be two distinct $g$-cycles containing $e$.
Since $C\not = C'$, there exists a vertex $u$
such that one of the edges incident to $u$ lies on both $C$ and $C'$,
while each of the remaining two edges incident to $u$ belongs to exactly one of $C$ and $C'$.
However, this contradicts $a = 0$.
\end{proof}

\begin{corollary} \label{cor:oddg}
If $\G$ is a cubic girth-regular graph with signature $(a, b, c)$
and girth $g$, where $g$ is odd,
then $a \ne 1$.
\end{corollary}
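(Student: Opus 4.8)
The plan is to argue by contradiction, assuming that $a = 1$, and to squeeze the signature into the extremal case of Lemma~\ref{lem:eventriineq} where part~\eqref{lem:eventriineq:3} forces the girth to be even. Since the signature $(a,b,c)$ of $\G$ is non-descending, the assumption $a = 1$ gives $1 = a \le b \le c$. First I would invoke the inequality in part~\eqref{lem:eventriineq:2}, namely $a + b \ge c$, which with $a = 1$ yields $c \le b + 1$; combined with $b \le c$ this pins $c$ down to one of the two values $b$ or $b + 1$.

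Next I would use the parity constraint from part~\eqref{lem:eventriineq:1}: the sum $a + b + c = 1 + b + c$ must be even, so $b + c$ is odd and hence $b$ and $c$ have opposite parities. This immediately rules out $c = b$, leaving only $c = b + 1$. But then $c = 1 + b = a + b$, so we are exactly in the boundary case of the triangle-type inequality rather than in its strict interior.

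Finally, since $a = 1 \ge 1$ and $c = a + b$, part~\eqref{lem:eventriineq:3} of Lemma~\ref{lem:eventriineq} applies and tells us that $g$ is even, contradicting the hypothesis that $g$ is odd. This contradiction shows that $a \ne 1$, as required.

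I do not expect any serious obstacle, as the whole argument is just a short interplay of the three parts of Lemma~\ref{lem:eventriineq}. The one point that really is the crux is recognising that the assumption $a = 1$, via the parity of $a+b+c$, upgrades the weak bound $c \le a+b$ to the \emph{equality} $c = a+b$; only in this extremal situation can part~\eqref{lem:eventriineq:3} be brought to bear. An equivalent way to see the same dichotomy is to revisit the quantities $x = (a+b-c)/2$, $y = (-a+b+c)/2$, $z = (a-b+c)/2$ from the proof of Lemma~\ref{lem:eventriineq}: with $a = 1$, the case $c = b$ would make $x = \half$ a non-integer, which is impossible since $x$ counts $g$-cycles, so necessarily $c = b+1$ and thus $x = 0$, again placing us in the boundary case handled by part~\eqref{lem:eventriineq:3}.
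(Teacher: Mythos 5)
Your argument is correct and is essentially identical to the paper's own proof: both use part~\eqref{lem:eventriineq:2} to restrict $c$ to $b$ or $b+1$, the parity condition of part~\eqref{lem:eventriineq:1} to exclude $c=b$, and then part~\eqref{lem:eventriineq:3} applied to $c=a+b$ to force $g$ even, a contradiction. No gaps.
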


\begin{proof}
Suppose that $a=1$. By part \eqref{lem:eventriineq:2} of Lemma~\ref{lem:eventriineq}, $c = b$ or $c = b+1$.
If $b=c$, then $a+b+c$ is odd, contradicting part \eqref{lem:eventriineq:1} of Lemma~\ref{lem:eventriineq}.
Hence $c=b+1=a+b$, and by part \eqref{lem:eventriineq:3} of Lemma~\ref{lem:eventriineq}, $g$ is even, contradicting our assumptions.
\end{proof}

\begin{lemma} \label{lem:ab}
Let $\G$ be a cubic girth-regular graph of girth $g$
with signature $(a, b, c)$.
Let $m = 2^{\lfloor g/2 \rfloor - 1}$.
Then $a \ge c - m$ and $b \le a - c + 2m$.
\end{lemma}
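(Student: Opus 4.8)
The plan is to reduce the whole statement to a single inequality counting girth cycles through a $2$-path. Following the computation in the proof of Lemma~\ref{lem:eventriineq}, I fix a vertex $u$ whose incident edges $e_1,e_2,e_3$ lie on $a,b,c$ girth cycles, and let $x,y,z$ be the numbers of $g$-cycles through the $2$-paths $e_1e_2$, $e_2e_3$, $e_3e_1$, so that $a=x+z$, $b=x+y$, $c=y+z$. A direct substitution yields $c-a=y-x$ and $b-a+c=2y$. Hence the two claimed inequalities are equivalent to $y-x\le m$ and $y\le m$ respectively, and since $x\ge 0$ the first follows from the second. Thus it suffices to prove $y\le m=2^{\lfloor g/2\rfloor-1}$.

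Write $d=\lfloor g/2\rfloor$ and denote the $2$-path $e_2e_3$ by $w-u-w'$. I would bound $y$ by constructing an injection from the set of $g$-cycles through $w-u-w'$ into the set of geodesics of length $d$ issuing from $u$ along the edge $e_2$. The number of the latter is easy to control: any cycle contained in the ball $B_{d-1}(u)$ of radius $d-1$ about $u$ would have length at most $2(d-1)<g$, so $B_{d-1}(u)$ is a tree, and a length-$d$ geodesic starting with the forced edge $e_2$ is obtained by choosing, at each of the remaining $d-1$ steps, one of the two \emph{downward} directions of the cubic tree. Distinct choices reach distinct vertices at distance $d$, giving exactly $2^{d-1}=m$ such geodesics.

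To define the map, I would use the fact that a $g$-cycle $C$ through $w-u-w'$ has a well-defined \emph{$w$-half}: when $g=2d$ it runs from $u$ through $w$ to the antipodal vertex $t$ at distance $d$, and when $g=2d+1$ it runs from $u$ through $w$ to the vertex $t$ at distance $d$ that is incident to the antipodal edge $tt'$. In either case this $w$-half is a geodesic of length $d$ starting with $e_2$, and I would send $C$ to it.

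The main obstacle is injectivity, i.e.\ showing that $C$ can be recovered from its $w$-half. Given $t$, the rest of $C$ is a path from $t$ back to $u$ that ends with the edge $e_3$ (preceded by the antipodal edge in the odd case). If two cycles shared the same $w$-half, comparing their completions would produce either two distinct geodesics of length $d-1$ between $w'$ and a common endpoint, or (in the odd case) two distinct neighbours $t',t''$ of $t$ each carrying a geodesic to $u$ through $w'$; in every instance these configurations close up into a cycle shorter than $g$, contradicting the girth. This forces the completion to be unique, so the map is injective and $y\le 2^{d-1}=m$, which by the reduction above gives both $a\ge c-m$ and $b\le a-c+2m$.
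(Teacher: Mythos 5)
Your proof is correct, and its overall skeleton is the same as the paper's: both arguments reduce the two inequalities, via the decomposition $a=x+z$, $b=x+y$, $c=y+z$ from the proof of Lemma~\ref{lem:eventriineq}, to the single claim that a $2$-path lies on at most $m=2^{d-1}$ girth cycles. What differs is how that claim is established. The paper takes the distance partition $D^i_j$ with respect to the two \emph{outer} vertices of the $2$-path and bounds the number of girth cycles by counting common neighbours of $D^{d-2}_d$ and $D^d_{d-2}$ when $g$ is even, respectively edges between $D^{d-1}_d$ and $D^d_{d-1}$ when $g$ is odd. You instead inject the girth cycles through the $2$-path into the set of geodesics of length $d$ leaving the \emph{central} vertex along one prescribed edge, recovering each cycle from its half by a uniqueness-of-completion argument; this makes explicit the well-definedness and injectivity checks that the paper dispatches with ``similarly as in the proof of Theorem~\ref{the:1}''. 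Both counts rest on the same underlying fact, namely that balls of radius $d-1$ induce trees, and on that point there is one imprecision worth fixing: the assertion that ``any cycle contained in $B_{d-1}(u)$ would have length at most $2(d-1)$'' is not literally true, since a cycle lying inside a ball of radius $r$ need not have length at most $2r$. The correct justification is the standard BFS one: a vertex at distance $i\le d-1$ from $u$ with two neighbours at distance $i-1$, or an edge joining two vertices at equal distance $i\le d-1$, would produce a closed walk of length at most $2d-1<g$ containing a cycle shorter than the girth. This is routine and does not affect the validity of your argument.
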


\begin{figure}[t]
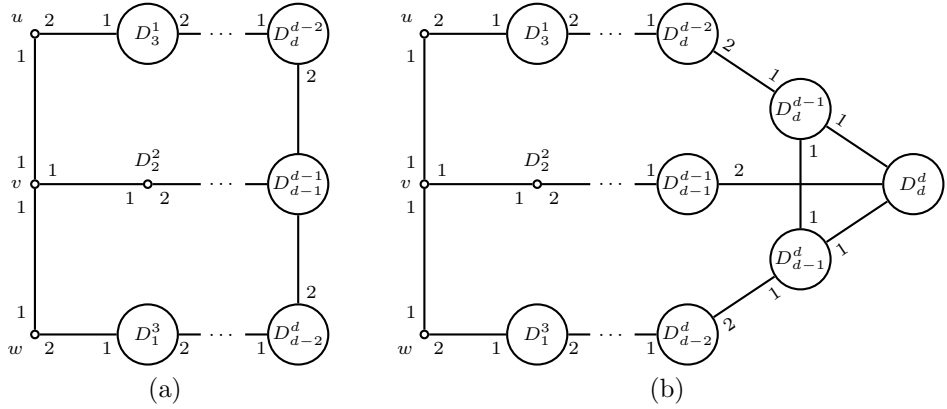

\makebox[\textwidth][c]{
\begin{tabular}{C{0.37}C{0.63}}
\leavevmode\beginpgfgraphicnamed{fig-uvweven}\input{tikz/uvweven.tikz}\endpgfgraphicnamed
&
\leavevmode\beginpgfgraphicnamed{fig-uvwodd}\input{tikz/uvwodd.tikz}\endpgfgraphicnamed
\\
(a) & (b)
\end{tabular}
}
\caption{The partitions of the vertices of $\G$ of girth $g$
corresponding to a $2$-path $uvw$ lying on $2^{d-1}$ girth cycles,
where $d = \lfloor g/2 \rfloor$.
(a) shows the case when $g$ is even,
while (b) shows the case when $g$ is odd.
The sets $D^i_j$ with $i+j < 2d$ are independent sets,
while the set $D^d_d$ may contain edges.
Note that no vertex of $D^i_i$ ($i \in \{d-1, d\}$)
with a neighbour in $D^{i-1}_{i-1}$
can have a neighbour in $D^{i-1}_i$ or $D^i_{i-1}$.}
\label{fig:uvw}
\end{figure}

\begin{proof}
Let us first show that any $2$-path in $\G$ lies on at most $m$ girth cycles.
Let $uvw$ be a $2$-path in $\G$,
and let $D^i_j$ be the set of vertices
at distance $i$ from $u$ and at distance $j$ from $w$.
Set $d = \lfloor g/2 \rfloor$.
Similarly as in the proof of Theorem~\ref{the:1},
we can see that the number of girth cycles containing the $2$-path $uvw$
equals the number of common neighbours
of vertices in the sets $D^{d-2}_d$ and $D^d_{d-2}$ if $g$ is even,
and the number of edges
between the vertices in the sets $D^{d-1}_d$ and $D^d_{d-1}$ if $g$ is odd,
see Figure~\ref{fig:uvw}.
In the even case, $|D^{d-2}_d| = |D^d_{d-2}| = 2^{d-2}$,
and each of the vertices from $D^{d-2}_d$ or $D^d_{d-2}$
may have at most two common neighbours with vertices of the other set,
so $uvw$ can lie on at most $2^{d-1} = m$ girth cycles.
In the odd case, we have $|D^{d-1}_d|, |D^d_{d-1}| \le 2^{d-1}$,
and each vertex from $D^{d-1}_d$ or $D^d_{d-1}$
may have at most one neighbour in the other set,
as otherwise we would have a cycle of length $2d < g$.
Therefore, $uvw$ can lie on at most $m$ girth cycles also in this case.

As each of $a, b, c$ is the sum of the number of girth cycles
two distinct $2$-paths sharing the central vertex lie on,
the quantity $c-a$ equals
the difference between the numbers of girth cycles two such $2$-paths lie on,
and is therefore at most $m$,
from which $a \ge c - m$ follows.
Also, the quantity $-a+b+c$
equals twice the number of girth cycles a $2$-path in $\G$ lies on,
and is therefore at most $2m$.
From this, $b \le a - c + 2m$ follows.
\end{proof}

\subsection{Dihedral schemes, truncations and signature $(0,1,1)$}
\label{ssec:trunc}

In this section we will allow graphs to have parallel edges and loops.
A graph {\em with parallel edges and loops}
is defined as a triple $(V,E,\partial)$
where $V$ and $E$ are the {\em vertex-set}
and the {\em edge-set} of the graph
and $\partial \colon E \to \{ X : X \subseteq V, |X| \le 2\}$
is a mapping that maps an edge to the set of its end-vertices.
If $|\partial(e)| = 1$, then $e$ is a {\em loop}.
Further, we let each edge consist of two mutually inverse {\em arcs},
each of the two arcs having one of the end-vertices as its {\em tail}.
If the graph has no loops, we may identify an arc with tail $v$ underlying edge $e$ with the pair $(v,e)$.
The set of arcs of a graph $\G$ is denoted by $A(\G)$ and the set of the arcs with their tail being a specific vertex $u$ by $\out_\G(u)$. The valence of a vertex $u$ is defined as
the cardinality of $\out_\G(u)$.

A {\em dihedral scheme} on a graph $\G$ (possibly with parallel edges
and loops)
is an irreflexive symmetric relation $\lr$ on the arc-set $A(\G)$ such that
the simple graph $(A(\G),\lr)$ is a $2$-regular graph each of whose connected components is the set $\out_\G(u)$ for some $u\in V(\G)$.
(Intuitively, we may think of a dihedral scheme as a collection of circles drawn around each vertex $u$ of $\G$ intersecting each of the arcs in $\out_\G(u)$
once.) Note that, according to this definition, the minimum valence of a graph admitting a dihedral scheme is at least $3$.

The group of all automorphisms of $\G$ that preserve the relation $\lr$ will be denoted by $\Aut(\G,\lr)$ and the dihedral scheme
$\lr$ is said to be {\em arc-transitive} if $\Aut(\G,\lr)$ acts transitively on $A(\G)$.

Given a dihedral scheme $\lr$ on a graph $\G$, let $\Tr(\G,\lr)$ be the simple graph whose vertices are the arcs of $\G$
and two arcs $s,t\in \G$ are adjacent in $\G$ if either $t \lr s$ or $t$ and $s$ are inverse to each other.
 The graph $\Tr(\G,\lr)$ is then
called the {\em truncation of $\G$ with respect to the dihedral scheme $\lr$}.
Note that $\Tr(\G, \lr)$ is a cubic graph
which is connected whenever $\G$ is connected.

 As we shall see in Section~\ref{ssec:maps},
 a natural source of arc-transitive dihedral schemes are arc-transitive maps (either orientable or non-orientable).
However, not all dihedral schemes arise in this way.

Clearly, the automorphism group $\Aut(\G,\lr)$ acts naturally
as a group of automorphisms of $\Tr(\G,\lr)$,
implying that $\Tr(\G,\lr)$ is vertex-transitive
whenever the dihedral scheme $\lr$ is arc-transitive.
The following result gives a characterisation
of arc-transitive dihedral schemes in group theoretical terms.
Here, the symbol $\D_d$ denotes the dihedral group of order $2d$
acting naturally on $d$ points,
while $\Z_d$ is the cyclic group acting transitively on $d$ points.

\begin{lemma}
Let $\G$ be an arc-transitive graph (possibly with parallel edges) of valence $d$ for some $d\ge 3$.
Then $\G$ admits an arc-transitive dihedral scheme
if and only if there exists an arc-transitive subgroup $G\le \Aut(\G)$ such that the group $G_u^{\out_\G(u)}$
induced by the action of the vertex stabiliser $G_u$ on the set $\out_\G(u)$ is permutation isomorphic
to the transitive action of $\D_d$, $\Z_d$ or (when $d$ is even) $\D_{\frac{d}{2}}$ on $d$ vertices.
\end{lemma}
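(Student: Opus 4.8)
The plan is to exploit the fact that, under a dihedral scheme, each set $\out_\G(u)$ becomes the vertex set of a $d$-cycle, and that the automorphism group of a $d$-cycle is the dihedral group $\D_d$. The whole statement then reduces to one purely group-theoretic observation combined with a standard transport-of-structure construction. The key fact I would record first is the classification of the \emph{transitive} subgroups of $\D_d$, acting on the $d$ vertices of the cycle $C_d$: up to permutation isomorphism these are exactly $\Z_d$, $\D_d$, and, when $d$ is even, $\D_{d/2}$. Writing $\D_d = \langle r, s\rangle$ with $r$ the rotation $i \mapsto i+1 \pmod d$ and $s$ a reflection, a subgroup of rotations is $\langle r^k\rangle$ and is transitive only for $k=1$ (giving $\Z_d$); a subgroup containing a reflection has rotation part $\langle r^k\rangle$ for some $k \mid d$, and since the orbit of a point then has size at most $2d/k$, transitivity forces $k \le 2$. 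The case $k=1$ gives $\D_d$, while $k=2$ is possible only when $d$ is even and the reflection is ``odd,'' yielding the regular action of $\D_{d/2}$ on $d$ points. Conversely each of these three groups is realised as a subgroup of $\D_d = \Aut(C_d)$ and hence preserves $C_d$.

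For the forward implication, suppose $\lr$ is an arc-transitive dihedral scheme and set $G = \Aut(\G,\lr) \le \Aut(\G)$, which is arc-transitive by definition. Fix a vertex $u$. Since $G_u$ preserves $\lr$, it preserves its restriction to $\out_\G(u)$, which is a $d$-cycle, so $G_u^{\out_\G(u)} \le \Aut(C_d) = \D_d$. Arc-transitivity of $G$ makes $G_u$ transitive on $\out_\G(u)$, hence $G_u^{\out_\G(u)}$ is a transitive subgroup of $\D_d$, which by the observation above is permutation isomorphic to $\Z_d$, $\D_d$, or $\D_{d/2}$.

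For the converse, suppose an arc-transitive $G \le \Aut(\G)$ has $G_u^{\out_\G(u)}$ permutation isomorphic to one of these three actions. By the observation the corresponding subgroup of $\D_d$ preserves $C_d$, and pulling this cycle back through the permutation isomorphism gives a $G_u$-invariant $d$-cycle $\lr_u$ on $\out_\G(u)$. I would then transport $\lr_u$ around the graph: for each vertex $v$ choose $g \in G$ with $g(u)=v$ (possible since $G$ is vertex-transitive) and put $\lr_v = g(\lr_u)$. Well-definedness is the point requiring care, and it is exactly where $G_u$-invariance is used: if $g(u)=g'(u)$ then $g^{-1}g' \in G_u$ fixes $\lr_u$, whence $g(\lr_u)=g'(\lr_u)$. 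The union $\lr = \bigcup_v \lr_v$ is then an irreflexive symmetric relation whose connected components are the sets $\out_\G(v)$, each a $d$-cycle, i.e.\ a dihedral scheme; and since $g(\lr_v)=\lr_{g(v)}$ for all $g \in G$, we get $G \le \Aut(\G,\lr)$, so $\lr$ is arc-transitive.

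The only genuinely delicate step is the group-theoretic classification of the transitive subgroups of $\D_d$; once that is in hand, the remainder is the routine observation that an arc-transitive dihedral scheme is the same datum as a $G$-invariant assignment of a cycle to each arc-neighbourhood, assembled by the standard transport argument whose sole subtlety is the well-definedness check above.
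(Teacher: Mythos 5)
Your proof is correct and follows essentially the same route as the paper: the forward direction identifies $G_u^{\out_\G(u)}$ as a transitive subgroup of $\Aut(C_d)=\D_d$, and the converse transports a $G_u$-invariant cycle on $\out_\G(u)$ to every vertex via vertex-transitivity. You merely make explicit two points the paper leaves as assertions, namely the classification of transitive subgroups of $\D_d$ and the invariance/well-definedness check for the transported relation.
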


\begin{proof}
Suppose that $\lr$ is a dihedral scheme on $\G$ and that $G=\Aut(\G,\lr)$.
Then $G_u^{\out_\G(u)}$ preserves the restriction $\lr_u$ of the
relation $\lr$ onto $\out_\G(u)$, and thus acts
as a vertex-transitive group of automorphisms on the simple graph $(\out_\G(u),\lr_u)$. Since the latter graph is a cycle of length $d$, we thus see
that $G_u^{\out_\G(u)}$ is a transitive subgroup of $\D_d$ and thus permutation
isomorphic to one of the transitive actions mentioned in the statement of the lemma.

Conversely, suppose that for some vertex $u$,
the group $G_u^{\out_\G(u)}$ is permutation isomorphic
to the transitive action of $\D_d$, $\Z_d$,
or (if $d$ is even) $\D_{\frac{d}{2}}$ on $d$ vertices.
In all three cases,
we may choose an adjacency relation $\lr_u$ on $\out_\G(u)$
preserved by $G_u^{\out_\G(u)}$ in such a way
that $(\out_\G(u),\lr_u)$ is a cycle.
For every $v\in V(\G)$,
choose an element $g_v\in G$ such that $v^{g_v} = u$,
and let $\lr_v$ be the relation on ${\out_\G(v)}$
defined by $s \lr_v t$ if and only if $s^{g_v} \lr_u t^{g_v}$.
Then clearly $(\out_\G(v),\lr_v)$ is a cycle,
implying that the union $\lr$ of all $\lr_u$ for $u\in V(\G)$
is a dihedral scheme.
Moreover, it is a matter of straightforward computation
to show that $\lr$ is invariant under $G$.
\end{proof}

We are now ready to prove the following characterisation of cubic girth-regular graphs of signature $(0,1,1)$.

\begin{theorem} \label{thm:azero}
If $\G$ is a simple cubic girth-regular graph of girth $g$ with signature $(0, 1, 1)$,
then $\G \cong \Tr(\Lambda,\lr)$, where $\lr$ is a dihedral scheme on a
$g$-regular graph $\Lambda$ (possibly with parallel edges).
Moreover, if $\G$ is vertex-transitive, then the dihedral scheme is arc-transitive.
\end{theorem}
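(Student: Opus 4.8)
The plan is to show that a signature-$(0,1,1)$ cubic graph decomposes canonically into vertex-disjoint girth cycles joined by a perfect matching, and that this decomposition is precisely the data of a dihedral scheme whose truncation recovers $\G$. First I would analyse the local structure at a vertex using the quantities introduced in the proof of Lemma~\ref{lem:eventriineq}: writing the three edges at a vertex $u$ as lying on $0,1,1$ girth cycles, the associated $2$-path counts $x,y,z$ satisfy $x=z=0$ and $y=1$. Hence the unique girth cycle through $u$ uses exactly the two \emph{cyclic} edges (those with $\epsilon=1$), while the \emph{matching} edge ($\epsilon=0$) lies on no girth cycle. From this I would deduce that every vertex lies on exactly one girth cycle, that distinct girth cycles are vertex-disjoint (two girth cycles meeting at $u$ would both have to contain the same $\epsilon=1$ edge, which lies on only one girth cycle), and therefore that the girth cycles partition $V(\G)$ while the $\epsilon=0$ edges form a perfect matching $M$ joining them.

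Next I would build $\Lambda$: let its vertices be the girth cycles of $\G$ and its edges be the matching edges, where a matching edge joining a vertex of a cycle $C$ to a vertex of a cycle $C'$ becomes an edge of $\Lambda$ joining $C$ and $C'$ (a loop when $C=C'$, possibly parallel to other such edges). Since each girth cycle has length $g$ and each of its $g$ vertices carries exactly one matching edge, $\Lambda$ is $g$-regular, and the arcs in $\out_\Lambda(C)$ correspond bijectively to the vertices lying on $C$. I would then define $\lr$ by relating two arcs out of $C$ exactly when the corresponding vertices of $C$ are consecutive on $C$; since $C$ is a single $g$-cycle, $(\out_\Lambda(C),\lr)$ is a single $g$-cycle, so $\lr$ is a genuine dihedral scheme (note $g\ge 3$).

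With this in place I would exhibit the isomorphism $\G\cong\Tr(\Lambda,\lr)$. The vertices of $\Tr(\Lambda,\lr)$ are the arcs of $\Lambda$, and the correspondence ``arc out of $C$ $\leftrightarrow$ vertex of $C$'' is a bijection onto $V(\G)$. Under it, the inverse-arc adjacencies of the truncation match the matching edges of $\G$ (the two arcs of a matching edge are its two endpoints), while the $\lr$-adjacencies match the cyclic edges of $\G$ (consecutive vertices of a girth cycle). As both edge families correspond, the bijection is a graph isomorphism; the only real care needed is the bookkeeping around loops and parallel edges of $\Lambda$, which the chosen definition of graphs with parallel edges and loops accommodates, since an arc of a loop at $C$ still remembers the endpoint of $C$ it represents.

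Finally, for the vertex-transitive statement I would note that the whole construction is intrinsic to $\G$: any automorphism of $\G$ preserves girth cycles and hence permutes the girth cycles and the matching edges, respecting incidence and the cyclic order around each cycle, so it induces an element of $\Aut(\Lambda,\lr)$ whose action on the arcs of $\Lambda$ agrees with its action on the vertices of $\G$. Vertex-transitivity of $\G$ therefore forces $\Aut(\Lambda,\lr)$ to act transitively on the arcs of $\Lambda$, i.e.\ $\lr$ is arc-transitive. I expect the conceptual crux to be the structural decomposition of the first step; the remaining steps are essentially a translation of that decomposition through the definitions of dihedral scheme and truncation, with the handling of loops and parallel edges being the only point demanding attention.
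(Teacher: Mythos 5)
Your proposal is correct and follows essentially the same route as the paper's proof: the girth cycles become the vertices of $\Lambda$, the $\epsilon=0$ edges become its edges, consecutiveness along each girth cycle defines $\lr$, and the arc-to-vertex correspondence gives the isomorphism with the truncation, with arc-transitivity then read off from vertex-transitivity exactly as you describe. The one small divergence is that the paper notes $\Lambda$ has no loops at all (a matching edge with both end-vertices on the same girth cycle $C$ would be a chord of $C$, yielding a cycle of length at most $\lfloor g/2\rfloor+1<g$), whereas you allow loops and gesture at the extra bookkeeping; since loops cannot in fact occur, this does not affect the validity of your argument.
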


\begin{proof}
Let $V$ be the vertex-set of $\G$,
let $\T$ be the set of girth cycles in $\G$,
let $\M$ be the set of edges that belong to no girth cycle in $\G$,
and let $G=\Aut(\G)$.
Note that since the signature of $\G$ is $(0,1,1)$, each vertex $v\in V$ is incident to exactly one
edge in $\M$ and to exactly one girth cycle in $\T$.

For an edge $v'v \in \M$,
let $C$ and $C'$ be the girth cycles that pass through $v$ and $v'$, respectively,
and let $\partial(v'v) = \{C,C'\}$.
This allows us to define a graph $\Lambda=(\T,\M, \partial)$.

Note that since $C, C' \in V(\Lambda)$ are girth cycles of $\G$,
we have $C\not = C'$, and so $\Lambda$ has no loops.
This allows us to view an arc of $\Lambda$ as
a pair $(C,e)$ where $e \in \M$ and $C$ is a girth cycle of $\G$ passing
through one of the two end-vertices of $e$. For two such pairs $(C_1,e_1)$ and $(C_2,e_2)$
we write $(C_1,e_1) \lr (C_2,e_2)$ if and only if $C_1 = C_2$ and the end-vertices of
$e_1$ and $e_2$ that belong to $C_1$ are two consecutive vertices of $C_1$.
Then $\lr$ is a dihedral scheme on $\Lambda$.
Let $\G' = \Tr(\Lambda,\lr)$.

We will now show that $\G' \cong \G$. By the definition of truncation, the vertex-set of $\G'$
equals the arc-set of $\Lambda$. For an arc $(C,e)$ of $\Lambda$ let $\varphi(C,e)$
be the unique end-vertex of $e$ that belongs to $C$.
Since each vertex of $\Gamma$ is incident to exactly one edge in $\M$ and exactly one cycle in $\T$,
it follows that $\varphi$ is a bijection between $V(\G')$ and $V(\G)$.
If $(C_1,e_1)$ and $(C_2,e_2)$ are adjacent in $\G'$, then either
$(C_1,e_1) \lr (C_2,e_2)$ or $(C_1,e_1)$ and $(C_2,e_2)$ are inverse arcs in $\G'$.
In the first case, $C_1=C_2$ and the vertices $\varphi(C_1,e_1)$ and $\varphi(C_2,e_2)$ are
adjacent on $C_1$. In the second case, $e_1 = e_2$ and
the vertices $\varphi(C_1,e_1)$ and $\varphi(C_2,e_2)$ are the two end-vertices of $e_1$. In both
cases $\varphi(C_1,e_1)$ and $\varphi(C_2,e_2)$ are adjacent in $\G$. By a similar argument
we see that whenever $\varphi(C_1,e_1)$ and $\varphi(C_2,e_2)$ are adjacent in $\G$,
$(C_1,e_1)$ and $(C_2,e_2)$ are adjacent in $\G'$.
Since both $\G$ and $\G'$ are simple graphs (one by assumption, the other by definition),
this shows that $\varphi$ is a graph isomorphism.

Suppose now that $G$ is transitive on the vertices of $\G$.
Since both sets $\T$ and $\M$ are invariant under the action of $G$,
there exists a natural action of $G$ on $\Lambda$
that preserves the dihedral scheme $\lr$;
that is, $G\le \Aut(\Lambda,\lr)$.
Now let $(C_1,e_1)$ and $(C_2,e_2)$ be two arcs of $\Lambda$,
and for $i\in\{1,2\}$,
let $v_i$ be the unique end-vertex of $e_i$ that lies on $C_i$.
Since $G$ is vertex-transitive on $\Gamma$,
there exists $g\in G$ mapping $v_1$ to $v_2$.
Since $C_i$ is the unique girth-cycle through $v_i$ for $i\in\{1,2\}$,
it follows that $C_1^g = C_2$.
Similarly,
since $e_i$ is the unique edge in $\M$ incident with $v_i$ for $i\in\{1,2\}$,
it follows that $e_1^g = e_2$.
This shows that $G$ acts transitively on the arcs of $\Lambda$.
\end{proof}

\begin{note}
Parallel edges occur in the graph $\Lambda$ as in Theorem~\ref{thm:azero}
whenever there exist two girth cycles in $\Gamma$
such that there are at least two edges
with an end-vertex in each of the two girth cycles.
In fact, it can be easily seen that in a girth-regular graph $\Gamma$
with signature $(0, 1, 1)$,
there are at most two such edges between any two girth cycles,
leading to at most two parallel edges between each two vertices,
with the exception of the case when $\Gamma$ is the $3$-prism
(see Section~\ref{sec:g1234})
and $\Lambda$ is the graph with two vertices
and three parallel edges between them.
\end{note}

\begin{note}
No nontrivial bound on the girth of the graph $\Lambda$
as in Theorem~\ref{thm:azero} can be given.
In fact, we can construct a family of graphs of constant girth
such that their truncations with respect to appropriate dihedral schemes
are cubic girth-regular graphs with signature $(0, 1, 1)$
and unbounded girth.
Let $\Lambda$ be a graph obtained by doubling all edges
in a $k$-regular graph of girth at least ${k+1 \over 2}$
-- the girth of $\Lambda$ is then $2$.
Equip $\Lambda$ with a dihedral scheme $\lr$
such that each two arcs with a common tail
belonging to two parallel edges are antipodal
in the connected component of the graph defined by $\lr$ they belong to.
Then $\Tr(\Lambda, \lr)$ is a cubic girth-regular graph of girth $k$
and signature $(0, 1, 1)$.
\end{note}

\subsection{Maps and signatures $(2,2,2)$ and $(1,1,2)$}
\label{ssec:maps}

In this section, it will be convenient to think of a graph (possibly with parallel edges)
as a topological space
having the structure of a regular 1-dimensional CW complex with the vertices of the
graph corresponding to the $0$-cells of the complex and the edges corresponding to
the $1$-cells.
A simple closed walk (that is, a closed walk that traverses each edge at most once) in the graph then corresponds to a closed curve in the corresponding topological space
which may intersect itself only in the points that correspond to the vertices of the graph.

Given a graph $\Gamma$ (viewed as a CW complex) and a set of
simple closed walks
$\T$ in $\G$,
one can construct a 2-dimensional CW complex in the following way.
First, take a collection $\mD$ of topological
disks, one for each
walk in $\T$. Then choose a
surjective continuous mapping
from the boundary of each disk to the closed curve in $\G$ representing the corresponding
walk in $\T$,
such that the preimage of each point that is not a vertex of the graph is a singleton.
Finally, identify each point of the boundary of the disk with its image under that continuous mapping.
Note that the resulting topological space is independent of the choice of the homeomorphisms $\mD$ and thus depends only on the choice of the graph and
the set of
closed walks $\T$.

When $\Gamma$ is connected and the resulting topological space is a closed surface (either orientable or non-orientable), the CW complex is also called a {\em map}. Its open $2$-cells are then called the {\em faces} of the map, the closed walks in $\T$ are called the
{\em face-cycles} and the graph $\Gamma$ is the {\em skeleton} of the map.
A map whose skeleton is a $k$-regular graph and all of whose face cycles are of length $m$ is called an $\{m,k\}$-map. The following lemma provides a sufficient condition on the set of cycles $\T$
under which the resulting $2$-dimensional CW complex is indeed a map.

\begin{lemma}
\label{lem:map}
Let $\G$ be a graph and $\T$ a set of
simple closed walks in $\G$ such that every edge of $\G$ belongs to precisely
two
walks in $\T$. For two arcs $s$ and $t$ with a common tail,
write $s\lr t$ if and only if the underlying edges of $s$ and $t$ are two consecutive edges on a
walk in $\T$.
If $\lr$ is a dihedral scheme, then $\G$ is the skeleton of a map whose face cycles are precisely the
walks in $\T$.
\end{lemma}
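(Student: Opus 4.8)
The plan is to show that the $2$-dimensional CW complex $M$ obtained from $\G$ and $\T$ by the disk-attaching construction described above is a closed surface; once this is established, $M$ is by definition a map whose skeleton is $\G$ and whose face-cycles are exactly the walks in $\T$. Since both $\G$ and $\T$ are finite, $M$ is a finite CW complex and hence compact, so it suffices to prove that $M$ is a $2$-manifold without boundary, i.e. that every point of $M$ has an open neighbourhood homeomorphic to $\mathbb{R}^2$. I would verify this separately for the three types of points of $M$: points lying in the interior of a face, points lying in the interior of an edge, and points that are vertices of $\G$.

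The first two cases are routine. A point in the interior of a face has a neighbourhood entirely contained in the corresponding open disk, hence a Euclidean neighbourhood. For a point $p$ in the interior of an edge $e$, the hypothesis that $e$ belongs to precisely two walks of $\T$, together with the fact that these walks are simple (so each traverses $e$ exactly once, and the preimage of $p$ on the attaching boundary is a single point), shows that exactly two face-disks are glued along $e$. A neighbourhood of $p$ is thus obtained by gluing two half-disks along the segment of $e$ through $p$, which is homeomorphic to $\mathbb{R}^2$.

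The vertex case is the crux of the argument and the place where the dihedral-scheme hypothesis enters. Fix a vertex $u$ and consider its link $L_u$, that is, the boundary of a small regular neighbourhood of $u$ in $M$. Each time a walk $W\in\T$ passes through $u$ it does so along two edges $e,f$ that are consecutive on $W$, contributing one corner of the disk $D_W$ at $u$; this corner meets $L_u$ in an arc joining the point where $e$ crosses $L_u$ to the point where $f$ crosses $L_u$. Identifying each such crossing point with the corresponding arc in $\out_\G(u)$, one sees that $L_u$ is precisely the simple graph $(\out_\G(u),\lr_u)$, where $\lr_u$ is the restriction of $\lr$ to $\out_\G(u)$: its vertices are the edge-ends at $u$, and two of them are joined exactly when the underlying edges are consecutive on some walk of $\T$, that is, exactly when the corresponding arcs are related by $\lr$. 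Because every edge at $u$ lies on precisely two walks, each edge-end is incident to exactly two corner-arcs, so $L_u$ is $2$-regular; and since $\lr$ is a dihedral scheme, the component $(\out_\G(u),\lr_u)$ is connected. Hence $L_u$ is a single cycle, i.e. a circle, and a regular neighbourhood of $u$ is a cone over a circle, which is an open disk. The main thing to get right here is the identification of the link with $(\out_\G(u),\lr_u)$, including the bookkeeping when a walk passes through $u$ more than once (each such visit simply contributes a separate corner-arc); once this is in place, the dihedral-scheme condition delivers exactly a single circle as the link. With all three cases handled, $M$ is a compact $2$-manifold without boundary, hence a closed surface, which completes the proof.
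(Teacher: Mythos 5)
Your proof is correct and takes essentially the same approach as the paper: both verify that face-interior points, edge-interior points and vertices of the attached complex have Euclidean neighbourhoods, using the two-walks-per-edge hypothesis for the edge case and the dihedral-scheme hypothesis to show that the local structure at each vertex is a single circle of corners. Your link-of-a-vertex formulation is just a slightly more formal phrasing of the paper's construction of a regular neighbourhood of $u$ by gluing half-disks in the cyclic order given by $\lr$.
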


\begin{proof}
Let us think of $\G$ as a 1-dimensional CW complex
and let us turn it into a $2$-dimensional CW complex
by adding to it one $2$-cell for each walk in $\T$ as described above.

Let us now prove that the resulting topological space $\M$
is a closed surface.
It is clear that the internal vertices of the $2$-cells
have a regular neighbourhood.
Further, since each edge of $\G$ lies on precisely two walks in $\T$,
the internal points of edges also have a regular neighbourhood,
made up from two half-disks,
each contained in the $2$-cell glued to one of the
walks in $\T$ passing through that edge.
Finally, let $u$ be a vertex of $\G$,
let $k$ be the valence of $u$,
and let $\{s_i : i \in \Z_k\}$ be the set of arcs with the initial vertex $u$
such that $s_0\lr s_1 \lr \ldots \lr s_{k-1} \lr s_0$.
By the definition of $\lr$,
each pair of arcs $(s_i, s_{i+1})$ ($i\in \Z_k$)
lies on a unique walk $C_i$ in $\T$.
Note that $C_i \not = C_{i+1}$,
for otherwise the edge underlying $s_{i+1}$
would lie on only one walk in $\T$.
This implies that a regular neighbourhood of $u$ in $\M$
can be built by taking appropriate half-disks
from the $2$-cells corresponding to the cycles $C_i$ ($i\in \Z_k$),
and gluing them together in the order suggested by the relation $\lr$.
This shows that $\M$ is a $2$-manifold without a boundary.
Finally, since $\G$ is finite, $\M$ is compact, and thus a closed surface.
Hence, $\M$ is a map with $\G$ as its skeleton.
\end{proof}

Each face of a map can be decomposed further into {\em flags}, that is,
triangles with one vertex in the centre of a face,
one vertex in the centre of an edge on the boundary of that face
and one in a vertex incident with that edge.
In most cases, a flag can be viewed as a triple consisting of a vertex,
an edge incident to that vertex,
and a face incident to both the vertex and the edge.

An automorphism of a map is then defined as a permutation of the flags
induced by a homeomorphism of the surface that preserves the embedded graph.
A map is said to be {\em vertex-transitive} or {\em arc-transitive}
provided that its automorphism group
induces a vertex-transitive or arc-transitive group
on the skeleton of the map, respectively.

\begin{note}
\label{note:map}
If a map is built from a graph $\G$ and a set of
simple closed walks
$\T$ as in Lemma~\ref{lem:map},
then each automorphism of $\G$ that preserves the set of
walks $\T$ clearly
extends to an automorphism of the map.
\end{note}

If $\M$ is a map on a surface $\s$, then the sets $V$, $E$ and $F$ of the vertices,
edges and faces, respectively, satisfy the {\em Euler formula}
$$
|V| - |E| + |F| = \chi(\s)
$$
where $\chi(\s)$ is the {\em Euler characteristic} of the surface $\s$. It is well known that
$\chi(\s) \le 2$ with equality holding if and only if $\s$ is homeomorphic to a sphere.
Moreover, if $\chi(\s)$ is odd, then $\s$ is non-orientable.

As the following two results show, skeletons of maps arise naturally when analysing cubic vertex-transitive graphs of signature $(2,2,2)$ or $(1,1,2)$.

\begin{theorem} \label{thm:abc2}
Let $\G$ be a simple connected cubic girth-regular graph of girth $g$ and order $n$
with signature $(2, 2, 2)$.
Then $g$ divides $3n$ and $\G$ is the skeleton of a $\{g, 3\}$-map
embedded on a surface with Euler characteristic
$$\chi = n \left(\frac{3}{g} - \frac{1}{2} \right).$$
Moreover, every automorphism of $\G$ extends to an automorphism of the map.
In particular, if $\G$ is vertex-transitive, so is the map.
\end{theorem}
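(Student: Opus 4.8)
The plan is to realise the girth cycles of $\G$ as the face-cycles of a map and then invoke Lemma~\ref{lem:map}. I would let $\T$ be the set of all girth cycles of $\G$, viewed as simple closed walks. Because the signature is $(2,2,2)$ we have $\epsilon(e)=2$ for every edge $e$, so each edge lies on precisely two members of $\T$ -- exactly the hypothesis of Lemma~\ref{lem:map}. It then only remains to check that the associated relation $\lr$, in which $s\lr t$ holds whenever $s$ and $t$ share a tail and their underlying edges are consecutive on some walk in $\T$, is a dihedral scheme.

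Establishing that $\lr$ is a dihedral scheme is the crux of the argument, and the only real obstacle, though it is quickly dispatched. I would fix a vertex $u$ with its three incident edges and borrow the bookkeeping of Lemma~\ref{lem:eventriineq}, writing $x,y,z$ for the numbers of girth cycles through the three $2$-paths centred at $u$. The signature $(2,2,2)$ gives $x+z=x+y=y+z=2$, forcing $x=y=z=1$; thus every $2$-path centred at $u$ lies on exactly one girth cycle. Hence each of the three pairs of arcs in $\out_\G(u)$ is consecutive on a unique girth cycle, so $\lr$ restricted to $\out_\G(u)$ is a triangle, that is, a cycle of length three. Since this holds at every vertex, $\lr$ is a dihedral scheme, and Lemma~\ref{lem:map} produces a map whose face-cycles are precisely the girth cycles. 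As these all have length $g$ and $\G$ is cubic, the map is a $\{g,3\}$-map.

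For the numerical claims I would double-count incidences between edges and face-cycles: each face-cycle has length $g$ and each edge lies on exactly two faces, so $g\,|F| = 2|E|$. Cubicity gives $|E| = \tfrac{3n}{2}$, whence $g\,|F| = 3n$ and $|F| = 3n/g$; in particular $g$ divides $3n$. The Euler formula then yields
$$
\chi = |V| - |E| + |F| = n - \frac{3n}{2} + \frac{3n}{g} = n\left(\frac{3}{g} - \frac{1}{2}\right),
$$
as required.

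Finally, for the symmetry statement I would observe that any automorphism of $\G$ preserves the girth and hence permutes the girth cycles, leaving the set $\T$ invariant; by Note~\ref{note:map} it therefore extends to an automorphism of the map. Consequently, if $\G$ is vertex-transitive then these extensions already act transitively on the vertices of the skeleton, so the resulting map is vertex-transitive in the sense defined above.
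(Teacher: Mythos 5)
Your proposal is correct and follows essentially the same route as the paper: realising the girth cycles as face-cycles via Lemma~\ref{lem:map}, counting faces to get $3n/g$ and the Euler characteristic, and invoking Note~\ref{note:map} for the automorphism extension. Your explicit verification that $\lr$ restricts to a triangle at each vertex (via $x=y=z=1$) is a welcome fleshing-out of a step the paper dismisses with ``it follows easily.''
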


\begin{proof}
Let $\T$ be the set of girth cycles of $\G$.
Since the valence of $\G$ is $3$,
it follows easily that the relation $\lr$ from Lemma~\ref{lem:map}
satisfies the conditions stated in the lemma;
that is, $\lr$ is a dihedral scheme.
Lemma~\ref{lem:map} thus yields a map $\M$
whose skeleton is $\G$ and whose face-cycles are precisely the walks in $\T$;
in particular, $\M$ is a $\{g,3\}$-map, as claimed.

Since $\G$ is a cubic graph with $n$ vertices, it has $3n/2$ edges, and since each vertex lies on three face-cycles and since each face-cycle contains $g$ vertices, the map $\M$ has $3n/g$ faces (showing that $g$ must divide $3n$). The Euler characteristic of $\M$ thus equals
$n - \frac{3n}{2} + \frac{3n}{g} = n(\frac{3}{g} - \frac{1}{2}).$

Since every automorphism of $\G$ preserves $\T$,
it extends to an automorphism of $\M$ (see Note~\ref{note:map}).
\end{proof}

Theorem~\ref{thm:abc2} has the following interesting consequence.

\begin{corollary}
There exists only finitely many connected cubic girth-regular graphs with signature $(2,2,2)$ of girth at most $5$.
\end{corollary}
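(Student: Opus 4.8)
The plan is to deduce the finiteness directly from the Euler-characteristic formula of Theorem~\ref{thm:abc2} together with the universal bound $\chi(\s) \le 2$ valid for every closed surface. Since $\G$ is a simple cubic graph, its girth satisfies $g \ge 3$, so the hypothesis $g \le 5$ restricts us to the three cases $g \in \{3,4,5\}$.

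First I would invoke Theorem~\ref{thm:abc2} to realise any such $\G$ of order $n$ as the skeleton of a $\{g,3\}$-map embedded on a surface $\s$ with
$$
\chi(\s) = n\left(\frac{3}{g} - \frac{1}{2}\right).
$$
Combining this with the inequality $\chi(\s) \le 2$ gives $n\left(\frac{3}{g} - \frac{1}{2}\right) \le 2$. The crucial observation is that the coefficient $\frac{3}{g} - \frac{1}{2}$ is strictly positive precisely when $g < 6$, which is exactly what the assumption $g \le 5$ guarantees. Hence we may divide through by it to obtain an explicit upper bound on $n$: substituting $g = 3, 4, 5$ yields $n \le 4$, $n \le 8$ and $n \le 20$, respectively. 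In every admissible case the order $n$ is bounded above by an absolute constant.

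Since there are only finitely many graphs on at most $20$ vertices, there can be only finitely many connected cubic girth-regular graphs with signature $(2,2,2)$ and girth at most $5$, which is the claim. There is no serious obstacle in this argument; the only genuinely delicate point, and the reason the hypothesis $g \le 5$ cannot be relaxed, is the positivity of the coefficient $\frac{3}{g} - \frac{1}{2}$. For $g = 6$ the formula forces $\chi(\s) = 0$ and imposes no restriction whatsoever on $n$, while for $g > 6$ the coefficient turns negative and the Euler bound becomes vacuous, so the method breaks down completely beyond girth $5$.
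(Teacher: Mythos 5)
Your argument is correct and is essentially the same as the paper's: both invoke Theorem~\ref{thm:abc2} to get $\chi = n(3/g - 1/2)$, note that this coefficient is positive (at least $1/10$) when $g \le 5$, and combine with $\chi \le 2$ to bound $n \le 20$. The only cosmetic difference is that you split into the cases $g=3,4,5$ while the paper uses the uniform bound $\chi \ge n/10$.
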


\begin{proof}
Suppose that $\G$ is a connected cubic girth-regular graph with signature $(2,2,2)$ of girth $g$
and order $n$.
By Theorem~\ref{thm:abc2}, $\G$ is a skeleton of a map on a surface of Euler characteristic $\chi = n (3/g - 1/2)$. Hence, if $g\le 5$, then $\chi \ge n/10$, and since $\chi\le 2$, it follows that $n\le 20$.
\end{proof}

\begin{note}
For each $g \ge 6$,
there are infinitely many girth-regular graphs of girth $g$
with signature $(2, 2, 2)$.
\end{note}

If $\M$ is a map and $\G$ is its skeleton, then one can define a dihedral scheme $\lr$ on
$\G$ by letting $s \lr t$ whenever the arcs $s$ and $t$ have a common tail and
the underlying edges of $s$ and $t$ are two consecutive edges on some face-cycle of $\M$.
The truncation $\Tr(\G,\lr)$ is then simply referred to as the {\em truncation of the map} $\M$ and denoted $\Tr(\M)$.
Note that this construction in some sense complements Lemma~\ref{lem:map}. We are now equipped for a characterisation of cubic girth-regular graphs with signature $(1,1,2)$.

\begin{theorem}
\label{thm:trieq1}
Let $\G$ be a simple connected cubic girth-regular graph of girth $g$
with $n$ vertices and signature $(1, 1, 2)$.
Then $g$ is even and $\G$ is the truncation of some map $\M$
with face cycles of length $g/2$.
In particular, $g/2$ divides $n$.
Moreover, if $\G$ is vertex-transitive,
$\M$ is an arc-transitive $\{g/2, \ell\}$-map for some $\ell > g$.
\end{theorem}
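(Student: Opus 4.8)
The plan is to reverse the truncation construction, reading off the map from the distribution of girth cycles. First I would record the local structure exactly as in the proof of Lemma~\ref{lem:eventriineq}: writing $x,y,z$ for the numbers of girth cycles carried by the three $2$-paths through a vertex and solving $a=x+z$, $b=x+y$, $c=y+z$ with $(a,b,c)=(1,1,2)$, one gets $x=0$ and $y=z=1$. Part~(3) of Lemma~\ref{lem:eventriineq} then immediately gives that $g$ is even. The vanishing $x=0$ says that the two edges used by a girth cycle at any vertex cannot both be the \emph{light} (unsaturated) edges; since each vertex carries a unique \emph{heavy} edge (the one on $2$ girth cycles), heavy and light edges must alternate around every girth cycle. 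Hence the heavy edges form a perfect matching $S$, the light edges form a disjoint union of cycles, every girth cycle has $g/2$ heavy and $g/2$ light edges, and by the signature each heavy edge lies on exactly $2$ girth cycles while each light edge lies on exactly $1$.

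Next I would build the skeleton. Let $\Lambda$ (possibly with loops and parallel edges) have as vertices the light cycles, as edges the heavy edges of $S$, with incidence sending a heavy edge to the pair of light cycles meeting its ends. Projecting each girth cycle of $\G$ to $\Lambda$ gives a simple closed walk of length $g/2$ (it traverses its $g/2$ distinct heavy edges); let $\T$ be the set of these walks. Each edge of $\Lambda$ lies on exactly two walks in $\T$, because each heavy edge lies on exactly two girth cycles. Defining $s\lr t$ for arcs with a common tail whose underlying edges are consecutive on a walk in $\T$, I would then verify the crux: two arcs out of a vertex $V$ of $\Lambda$ (equivalently, two vertices of $\G$ on the light cycle $V$) are $\lr$-related precisely when they are adjacent on $V$. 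This uses $y=z=1$ together with the alternation: a light edge lies on a unique girth cycle, on which it is flanked by the two heavy edges at its ends, so consecutive heavy edges on a walk correspond exactly to light-edge adjacency on a single light cycle. Consequently $(\out_\Lambda(V),\lr)$ is the single cycle $V$, so $\lr$ is a dihedral scheme, and Lemma~\ref{lem:map} makes $\Lambda$ the skeleton of a map $\M$ whose face cycles are the walks in $\T$, all of length $g/2$.

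It then remains to identify the truncation and establish the counts. The dihedral scheme induced on $\Lambda$ by the face cycles of $\M$ is exactly $\lr$, so $\Tr(\M)=\Tr(\Lambda,\lr)$; I would check that sending each vertex of $\G$ to the pair (its light cycle, its heavy edge) is a bijection onto the arcs of $\Lambda$ identifying the heavy edges of $\G$ with the inverse-arc edges and the light edges with the $\lr$-edges, so that $\Tr(\M)\cong\G$. Counting arcs gives $n=2|E(\Lambda)|$, and summing face lengths over the $|F|$ faces gives $(g/2)|F|=2|E(\Lambda)|=n$, whence $g/2\mid n$. For the vertex-transitive case, $\Aut(\G)$ preserves the girth cycles, hence $S$ and the light cycles, hence acts on $\Lambda$ preserving $\T$; transitivity on $V(\G)$ transports through the bijection to transitivity on the arcs of $\Lambda$, so $\M$ is arc-transitive and in particular $\Lambda$ is regular of some valence $\ell$, making $\M$ a $\{g/2,\ell\}$-map. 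Finally each light cycle is a genuine cycle of $\G$, so $\ell\ge g$; and $\ell=g$ is impossible, since then a light cycle would be a girth cycle consisting entirely of light edges, contradicting the alternation, so $\ell>g$.

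The main obstacle will be the verification in the second paragraph that $\lr$ is a genuine dihedral scheme, i.e.\ that around each vertex of $\Lambda$ the consecutiveness relation closes up into a single cycle coinciding with the corresponding light cycle; this is where the precise signature $(1,1,2)$ is used in full. Once this is in place, the application of Lemma~\ref{lem:map}, the identification $\Tr(\M)\cong\G$, the divisibility count, and the arc-transitivity argument are all routine. A secondary technical point is the bookkeeping when $\Lambda$ carries loops or parallel edges, which the map framework of Section~\ref{ssec:maps} permits but which must be handled carefully in the definitions of arcs and of $\lr$.
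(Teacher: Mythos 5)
Your proposal is correct and follows essentially the same route as the paper: the heavy/light edge dichotomy is exactly the paper's partition into $\Y$ and $\X$, the graph $\Lambda$ on the light cycles with the heavy matching as edge set, the walks $\hat D$ of length $g/2$, the verification via $x=0$, $y=z=1$ that $\lr$ closes up into the light cycle around each vertex of $\Lambda$, the appeal to Lemma~\ref{lem:map}, the bijection onto arcs of $\Lambda$, and the transport of vertex-transitivity to arc-transitivity all match the paper's proof. Your explicit justification that $\ell>g$ (a light cycle of length $g$ would be a girth cycle with no heavy edges, contradicting alternation) is a small point the paper leaves implicit.
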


\begin{proof}
By part \eqref{lem:eventriineq:3} of Lemma~\ref{lem:eventriineq}
we know that $g$ is even and in particular, $g\ge 4$.
Let $\X$ be the set of edges of $\G$ that belong to exactly one girth cycle
and let $\Y$ be the set of edges that belong to two girth cycles.
Since the signature of $\G$ is $(1,1,2)$,
every vertex of $\G$ is incident to two edges in $\X$ and one edge in $\Y$.
Consequently, the edges in $\Y$ form a perfect matching of $\G$
and the subgraph induced by the edges in $\X$
is a union of vertex-disjoint cycles of $\G$
that cover all the vertices of $\G$.
Let us denote the set of these cycles by $\C$.

Observe also that two edges in $\X$ sharing a common end-vertex, say $v$,
cannot be two consecutive edges on the same girth cycle,
for otherwise that would be a unique girth cycle through $v$,
contradicting the fact that the third edge incident with $v$
belongs to two girth cycles.
Since the edges in $\Y$ form a complete matching of $\G$,
the same holds for the edges in $\Y$,
implying that the edges on any girth cycle
alternate between the sets $\X$ and $\Y$.

For an edge $e$ in $\Y$ with end-vertices $u$ and $v$,
let $C_u$ and $C_v$ be the unique cycles in $\C$
that pass through $u$ and $v$, respectively,
and define $\partial(e)$ to be the pair $\{C_u,C_v\}$.
Let $\Lambda = (\C,\Y,\partial)$.
Note that since the edges of $\Lambda$
are precisely those edges of $\G$ that belong to $\Y$,
we may think of the arc-set $A(\Lambda)$
as being the set of arcs of $\G$ that underlie edges in $\Y$.
Note also that it may happen that for some $e \in \Y$,
we may have $C_u = C_v$ and then the graph $\Lambda$ has loops.
If $D$ is a girth cycle of $\G$,
then the edges of $D$ that belong to $\Y$
induce a simple closed walk in the graph $\Lambda$ of length $g/2$,
which we denote $\hat{D}$.

Let $\T$ be the set of walks $\hat{D}$
where $D$ runs through the set of girth cycles of $\G$.
Since edges of $\Lambda$ correspond to the edges of $\G$
that pass through two girth cycles of $\G$,
each edge of $\Lambda$ belongs to two walks in $\T$.
As $|\Y| = n/2$, it follows that $g/2$ divides $n$.

Let $\lr$ be the relation on the arcs of $\Lambda$
defined by $\T$ as explained in Lemma~\ref{lem:map}.
It is easy to see that $\lr$ is a dihedral scheme.
Indeed, let $C\in \C$ be a vertex of $\Lambda$ viewed as a cycle in $\G$
and let $v_0,v_1,\ldots, v_{k-1} \in V(\G)$ be its vertices
listed in a cyclical order as they appear on $C$.
Further, for each $i\in \Z_k$,
let $s_{i}$ be the arc of $\G$ with tail $v_i$
that underlies an edge contained in $\Y$.
The arc $s_i$ can thus also be viewed as an arc of $\Lambda$.
Observe that $\out_\Lambda(C) = \{ s_0, s_1, \ldots, s_{k-1}\}$
and that $s_0\lr s_1\lr \ldots \lr s_{k-1} \lr s_0$.
In particular, $\lr$ is a dihedral scheme.

By Lemma~\ref{lem:map},
there exists a map $\M$ with skeleton $\Lambda$
in which $\T$ is the set of face-cycles.
Moreover, $\lr$ equals the dihedral scheme arising from that map.

Let $\G' = \Tr(\M)$ and let $s$ be a vertex of $\G'$.
Then $s$ is an arc of $\Lambda$ and thus also an arc of $\G$
underlying an edge in $\Y$.
By letting $\varphi(s)$ be the tail of $s$ (viewed as an arc of $\G$),
we define a mapping $\varphi \colon V(\G') \to V(\G)$.
Note that the mapping which assigns to a vertex $v\in V(\G)$
the unique arc of $\G$ with tail $v$ that underlies an edge in $\Y$
is the inverse of $\varphi$,
showing that $\varphi$ is a bijection.
Furthermore,
note that two vertices $s$ and $t$ of $\G'$ are adjacent in $\G'$
if and only if one of the following happens:
(1) they are inverse to each other as arcs of $\Lambda$; or
(2) they have a common tail and $s\lr t$.
In case (1), $\varphi(s)$ and $\varphi(t)$ are adjacent in $\G$
via an edge in $\Y$,
while in case (2),
$\varphi(s)$ and $\varphi(t)$ are adjacent in $\G$ via an edge in $\X$.
Conversely, if for some $s,t\in V(\G')$,
the images $\varphi(s)$ and $\varphi(t)$ are adjacent in $\G$,
then either $s$ and $t$ are inverse to each other as arcs of $\Lambda$
(this happens if $\varphi(s)$ and $\varphi(t)$ form an edge in $\Y$),
or $s$ and $t$ have a common tail and $s\lr t$
(this happens if $\varphi(s)$ and $\varphi(t)$ form an edge in $\X$).
In both cases, $s$ and $t$ are adjacent in $\G'$.
This implies that $\varphi$ is an isomorphism of graphs
and thus $\G\cong \Tr(\M)$, as claimed.

Since every automorphism of $\G$
preserves each of the sets $\Y$ and $\X$
(and thus also $\C$),
it clearly induces an automorphism of the graph $\Lambda$
which preserves the set $\T$. In particular, every automorphism of $\G$
induces an automorphism of the map $\M$.

Finally, suppose that $\G$ is vertex-transitive.
Then all cycles of $\C$ have the same length $\ell > g$.
As each vertex of a cycle of $\C$ is incident to precisely one edge of $\Y$,
it follows that $\Lambda$ is an $\ell$-regular graph,
and $\M$ is then a $\{g/2, \ell\}$-map.
Let $G$ be a group of automorphisms of $\G$
acting transitively on $V(\G)$.
Note that every vertex of $\G$
is the tail of precisely one arc of $\G$ that underlies an edge of $\Y$.
In view of our identification of the arcs of $\G'$
with the arcs of $\G$ that underlie an edge in $\Y$,
we thus see that the transitivity of the action of $G$ on $V(\G)$
implies the transitivity of the action of $G$ on the arcs of $\M$.
\end{proof}

\section{Cubic girth-regular graphs of girths $3$ and $4$}
\label{sec:g1234}

Before stating the theorem about girth-regular cubic graphs of girth $3$,
let us point out that every cubic graph admits a unique dihedral scheme,
which is preserved by every automorphism of the graph.
This allows us to talk about truncations of cubic graphs without specifying the dihedral scheme.

\begin{theorem} \label{thm:g3}
Let $\G$ be a connected cubic girth-regular graph of girth $3$.
Then one of the following holds:
\begin{enumerate}[(a)]
\item $\Gamma$ is isomorphic to the complete graph $K_4$;
\item $\G$ has signature $(0,1,1)$ and is isomorphic to the truncation of a cubic graph.
\end{enumerate}
\end{theorem}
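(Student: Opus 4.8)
The plan is to first pin down the possible signatures of $\G$ using the bound and parity constraints already established, and then to invoke the structural theorems for the two surviving signatures. Since the girth is $g=3$, the girth cycles are triangles and $d = \lfloor g/2 \rfloor = 1$, so Theorem~\ref{the:1} gives $c \le (k-1)^d = 2$ for the largest signature entry $c$ (writing the signature as $(a,b,c)$). As $\G$ contains a triangle, at least one edge lies on a girth cycle, so $c \ge 1$. Because $g$ is odd, Corollary~\ref{cor:oddg} forbids $a = 1$, leaving only $a \in \{0,2\}$.

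I would then split into these two cases. If $a = 2$, then $2 = a \le b \le c \le 2$ forces the signature to be $(2,2,2)$, so Theorem~\ref{thm:abc2} applies and exhibits $\G$ as the skeleton of a $\{3,3\}$-map of Euler characteristic $\chi = n(3/g - 1/2) = n/2$. Since $\chi \le 2$ this yields $n \le 4$; as a cubic graph has an even number of at least four vertices, we get $n = 4$, and the only simple cubic graph on four vertices is $K_4$, whence $\G \cong K_4$ (case~(a)). If instead $a = 0$, then Lemma~\ref{lem:azero} immediately forces the signature to be $(0,1,1)$, and Theorem~\ref{thm:azero} presents $\G$ as $\Tr(\Lambda, \lr)$ for a dihedral scheme $\lr$ on a $g$-regular, i.e.\ cubic, graph $\Lambda$. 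Since every cubic graph carries a unique dihedral scheme (as noted at the start of this section, the only $2$-regular graph on three arcs is a single $3$-cycle), this is precisely a truncation of a cubic graph, giving case~(b).

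None of the steps poses a serious obstacle once Theorems~\ref{the:1},~\ref{thm:abc2} and~\ref{thm:azero} together with Corollary~\ref{cor:oddg} and Lemma~\ref{lem:azero} are available; the bulk of the work is the short bookkeeping that rules out every signature except $(2,2,2)$ and $(0,1,1)$. The one point requiring a little care is the reading of \emph{cubic graph} in case~(b): the graph $\Lambda$ provided by Theorem~\ref{thm:azero} may have parallel edges (for instance when $\G$ is the $3$-prism, in which case $\Lambda$ is the dipole on two vertices with three parallel edges), so I should make sure the conclusion is understood in the multigraph convention of Section~\ref{ssec:trunc} rather than asserting that $\Lambda$ is simple.
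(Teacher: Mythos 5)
Your proof is correct, and the $(0,1,1)$ branch coincides with the paper's argument (Lemma~\ref{lem:azero} followed by Theorem~\ref{thm:azero}), but you reach $K_4$ by a genuinely different route. The paper splits on the value of $c$: from $c\le 2$ (Theorem~\ref{the:1}) it handles $c=2$ in one stroke by invoking Theorem~\ref{the:3}, which only needs the top entry of the signature to attain the bound $2^d$ and immediately returns $K_4$ or the Petersen graph; this sidesteps any need to determine $a$ and $b$ first. You instead split on $a$, using Corollary~\ref{cor:oddg} to exclude $a=1$ and Lemma~\ref{lem:azero} to collapse the $a=0$ case, so that the only surviving signature with $c=2$ is $(2,2,2)$, and then apply the map-theoretic Theorem~\ref{thm:abc2} with the Euler characteristic computation $\chi=n/2\le 2$ to force $n=4$. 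Your version buys independence from the Moore-graph machinery of Section~\ref{sec:bound} (Theorem~\ref{the:3} and its rather long proof) at the cost of some extra signature bookkeeping and reliance on the embedding theorem; the paper's version is shorter because Theorem~\ref{the:3} absorbs all signatures with $c=2$ at once. Both are complete: your case analysis does cover every admissible signature ($(1,\cdot,\cdot)$ excluded by Corollary~\ref{cor:oddg}, $(0,b,c)$ with $c\ge 2$ excluded by Lemma~\ref{lem:azero}, and $c\ge 1$ because a girth cycle exists), and your closing caveat that $\Lambda$ may have parallel edges (e.g.\ the dipole arising from the $3$-prism) matches the convention of Section~\ref{ssec:trunc}.
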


\begin{proof}

Let $(a,b,c)$ be the signature of $\Gamma$.
By Theorem~\ref{the:1} it follows that $c\le 2$.
If $c=2$, then
Theorem~\ref{the:3} implies that $\G$ is isomorphic to $K_4$.
On the other hand, if $c=1$, then Lemmas~\ref{lem:eventriineq} and~\ref{lem:azero} imply that
the signature of $\Gamma$ is $(0, 1, 1)$, and
 by Theorem~\ref{thm:azero}, it follows that $\Gamma$ is the truncation of a cubic graph.
\end{proof}

Let us now move our attention to graphs of girth $4$.
Before stating the classification theorem,
let us define two families of cubic vertex-transitive graphs.

For $n\ge 3$, let the {\em $n$-Möbius ladder} $M_n$
be the Cayley graph $\Cay(\Z_{2n}, \{-1, 1, n\})$.
Note that such a graph has girth $4$.
The graph $M_n$ has signature $(4,4,4)$ if $n=3$
(in this case it is isomorphic to the complete bipartite graph $K_{3,3}$),
and $(1,1,2)$ if $n\ge 4$.
An $n$-Möbius ladder can also be seen as the skeleton of the truncation
of the $\{2, 2n\}$-map with a single vertex embedded on a projective plane.

For $n \ge 3$, the {\em $n$-prism} $Y_n$ is defined as the
Cartesian product $C_n \Box K_2$ or, alternatively,
as the Cayley graph $\Cay(\Z_n \times \Z_2, \{(-1, 0), (1, 0),$ $(0, 1)\})$.
The girth of $Y_3$ is $3$, while the girth of $Y_n$ for $n\ge 4$ is $4$.
The graph $Y_n$ has signature $(2,2,2)$ if $n=4$
(in this case it is isomorphic to the cube $Q_3$),
and $(1,1,2)$ if $n\ge 5$.
An $n$-prism can also be seen as the skeleton of the truncation
of the $\{2, n\}$-map with two vertices embedded on a sphere,
i.e., an $n$-gonal hosohedron.

\begin{theorem} \label{thm:g4}
Let $\G$ be a connected cubic girth-regular graph of girth $4$.
Then $\G$ is isomorphic to one of the following graphs:
\begin{enumerate}[(a)]
\item the $n$-Möbius ladder $M_n$ for some $n\ge 3$;
\item the $n$-prism $Y_n$ for some $n\ge 4$;
\item $\Tr(\Lambda,\lr)$ for some tetravalent graph $\Lambda$ and a dihedral scheme $\lr$ on $\Lambda$.
\end{enumerate}
\end{theorem}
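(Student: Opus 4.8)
The plan is to reduce the possible signatures of $\G$ to a short list, and then invoke the structural results of Section~\ref{sec:cgr} case by case. Write the signature as $(a,b,c)$. By Theorem~\ref{the:1} (with $k=3$, $d=2$) we have $c\le 4$. The key reduction is to exclude the value $c=3$ by a local argument. In a cubic graph of girth $4$, $\epsilon(uv)$ equals the number of edges between $N(u)\setminus\{v\}=\{x_1,x_2\}$ and $N(v)\setminus\{u\}=\{w_1,w_2\}$, and these four vertices are pairwise distinct since $\G$ is triangle-free. If $\epsilon(uv)=3$, exactly one of the four possible edges is absent, say $x_2w_2$; then $x_1$ and $w_1$ have all their neighbours inside $\{u,v,w_1,w_2,x_1,x_2\}$, and the third neighbour $s$ of $w_2$ is forced to be a genuinely new vertex. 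I would then check that the edge $w_2s$ lies on no $4$-cycle, since any such cycle would need a common neighbour of $s$ with $v$ or with $x_1$ other than $w_2$, and the only candidates ($u$ and $w_1$) are already saturated and non-adjacent to $s$. Thus $0$ appears in the signature, so by Lemma~\ref{lem:azero} the signature is $(0,1,1)$, contradicting $c\ge 3$. Hence $c\in\{1,2,4\}$.

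Combining $c\in\{1,2,4\}$ with the parity condition of Lemma~\ref{lem:eventriineq}\eqref{lem:eventriineq:1}, the inequality $a+b\ge c$ of Lemma~\ref{lem:eventriineq}\eqref{lem:eventriineq:2}, and Lemma~\ref{lem:azero} (which kills $(0,2,2)$), leaves exactly the signatures $(0,1,1)$, $(1,1,2)$, $(2,2,2)$ and $(4,4,4)$. The two extreme cases are immediate. If the signature is $(4,4,4)$, then $c=4=(k-1)^d$, so Theorem~\ref{the:2} identifies $\G$ with $K_{3,3}=M_3$, which is case (a). If the signature is $(0,1,1)$, then Theorem~\ref{thm:azero} realises $\G$ as $\Tr(\Lambda,\lr)$ for a dihedral scheme on a $4$-regular graph $\Lambda$, which is case (c).

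For the signature $(2,2,2)$, I would apply Theorem~\ref{thm:abc2}: $\G$ is the skeleton of a $\{4,3\}$-map on a surface of Euler characteristic $\chi=n(3/4-1/2)=n/4$. Since $\chi\le 2$ we get $n\le 8$; as $\chi$ must be an integer and a cubic graph of girth $4$ satisfies $n\ge 6$, the only possibility is $n=8$, whence $\chi=2$, the surface is the sphere, and the map is a $\{4,3\}$-quadrangulation of the sphere on eight vertices, that is, the cube $Q_3=Y_4$, giving case (b).

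The main work is the signature $(1,1,2)$. Here Theorem~\ref{thm:trieq1} gives $\G\cong\Tr(\M)$ for a map $\M$ whose face-cycles have length $g/2=2$, with skeleton $\Lambda=(\C,\Y)$: the vertices are the \emph{side} cycles induced by the edges lying on a single $4$-cycle, and the edges are the perfect matching $\Y$ of \emph{rungs} lying on two $4$-cycles. Each rung lies on exactly two face-cycles, so $\M$ is a closed surface; counting total face-cycle length two ways gives $|\Y|$ faces, whence $\chi(\M)=|\C|-|\Y|+|\Y|=|\C|$. Since $\chi(\M)\le 2$ and $|\C|\ge 1$, there are at most two side cycles. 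When $|\C|=1$ the single side cycle with its \emph{antipodal} matching of rungs lives on the projective plane ($\chi=1$) and is a Möbius ladder, while when $|\C|=2$ the two side cycles joined by the rungs live on the sphere ($\chi=2$) and form a prism; the requirement that the girth be exactly $4$ (so the signature is $(1,1,2)$ and not $(4,4,4)$ or $(2,2,2)$) forces $n\ge 4$ in the Möbius case and $n\ge 5$ in the prism case, giving cases (a) and (b). The most delicate point, and the step I expect to require the most care, is this final identification: one must verify from the digonal face structure that the only admissible skeletons $\Lambda$ are a single vertex with loops and a dipole, so that $\Tr(\M)$ is precisely $M_n$ or $Y_n$ and no other ladder-like graph can occur.
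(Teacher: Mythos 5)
Your proof is correct and follows the same overall architecture as the paper's: bound $c\le 4$ via Theorem~\ref{the:1}, reduce the signature to one of $(0,1,1)$, $(1,1,2)$, $(2,2,2)$, $(4,4,4)$, and dispatch each case with Theorems~\ref{the:2}, \ref{thm:azero}, \ref{thm:abc2} and~\ref{thm:trieq1}. The one genuinely different step is the exclusion of $c=3$. The paper first narrows the candidates to the signatures $(1,2,3)$ and $(2,3,3)$ using Lemmas~\ref{lem:eventriineq} and~\ref{lem:azero}, then rules both out by a more intricate count: it determines which edges around a saturated edge $uv$ carry two or three $4$-cycles and exhibits an edge forced to lie on exactly two $4$-cycles while already visibly lying on three. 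Your argument is shorter and treats both candidate signatures at once: since exactly one of the four cross edges between $N(u)\setminus\{v\}$ and $N(v)\setminus\{u\}$ is missing, the two vertices incident to all their cross edges have their full neighbourhoods determined, so the third neighbour $s$ of $w_2$ is a new vertex and the edge $w_2s$ lies on no $4$-cycle; Lemma~\ref{lem:azero} then forces signature $(0,1,1)$, contradicting $c\ge 3$. I checked the details ($s$ really is new, and the only candidates for a common neighbour closing a $4$-cycle through $w_2s$ are the already-saturated vertices $u$ and $w_1$), and the argument is sound. Elsewhere the differences are cosmetic: in the $(2,2,2)$ case you exclude $\chi=1$ via the lower bound $n\ge 6$ and integrality of $\chi$ rather than by recognising the hemicube and noting that its skeleton $K_4$ has girth $3$, and in the $(1,1,2)$ case your computation $\chi(\M)=|\C|\le 2$ is exactly the paper's; the ``delicate point'' you flag at the end (that the only admissible digonal maps are a bouquet of loops on the projective plane and a dipole on the sphere) is treated at the same level of detail in the paper, so nothing is missing relative to the published argument.
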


\begin{figure}[t]
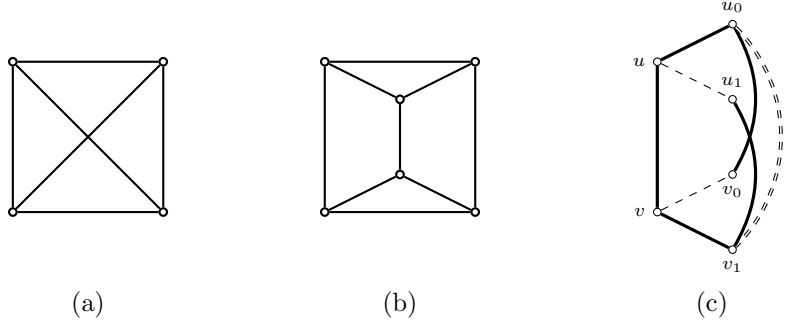

\makebox[\textwidth][c]{
\begin{tabular}{C{0.3}C{0.3}C{0.3}}
\leavevmode\beginpgfgraphicnamed{fig-y2}\input{tikz/y2.tikz}\endpgfgraphicnamed
&
\leavevmode\beginpgfgraphicnamed{fig-a3}\input{tikz/a3.tikz}\endpgfgraphicnamed
&
\leavevmode\beginpgfgraphicnamed{fig-g4-c3}\input{tikz/g4-c3.tikz}\endpgfgraphicnamed
\\
(a) & (b) & (c)
\end{tabular}
}
\caption{(a) The graph $K_4$ of girth $3$ with signature $(2, 2, 2)$.
(b) The graph $Y_3$ of girth $3$ with signature $(1, 1, 2)$.
(c) Constructing a graph of girth $4$ with $c = 3$.
The dashed edges should lie on two $4$-cycles,
however the doubled edge already lies on three $4$-cycles.}
\label{fig:g34}
\end{figure}

\begin{proof}
Let $(a,b,c)$ be the signature of $\Gamma$.
By Theorem~\ref{the:1}, we see that $c\le 4$, and by Theorem~\ref{the:2},
if $c=4$, then the signature of $\Gamma$ is $(4,4,4)$
and $\Gamma \cong K_{3,3} \cong M_3$.

Suppose now that $c=3$.
Then, by Lemma~\ref{lem:eventriineq}, $a+b$ is odd,
and by Lemma~\ref{lem:azero}, $a\ge 1$.
Hence either $a=1$ and then $b=2$, or $a=2$ and then $b=3$.
The possible signatures in this case are thus $(1,2,3)$ and $(2,3,3)$.
Let us show that neither can occur.

Let $uv$ be an edge of $\G$ lying on three $4$-cycles,
and $u_0, u_1$ and $v_0, v_1$ be the remaining neighbours of $u$ and $v$,
respectively.
There must be three edges with one end-vertex in $\{u_0, u_1\}$
and the other in $\{v_0,v_1\}$;
without loss of generality,
these edges are $u_0 v_0$, $u_0 v_1$ and $u_1 v_1$
(see Figure~\ref{fig:g34}(c)).
Then the edges $uu_0$ and $vv_1$ already lie on three $4$-cycles,
so we have $b = 3$, and thus $a=2$.
In particular, $\epsilon(uu_1)=\epsilon(vv_0) = 2$.
Then the edge $u_1v_1$, being incident to both $u_1$ and $v_1$,
belongs to precisely three $4$-cycles, that is $\epsilon(u_1v_1) = 3$.
Similarly, $\epsilon(u_0v_0) = 3$.
It follows that the edge $u_0 v_1$ lies on precisely two $4$-cycles.
However, we have already determined three $4$-cycles on which $u_0v_1$ lies;
these are $uu_0v_1v$, $v_0u_0v_1v$, and $uu_0v_1u_1$.
This contradiction shows that the case $c = 3$ is not possible.

Suppose now that $c=2$. By Lemma~\ref{lem:eventriineq}, $a+b$ is even, and by Lemma~\ref{lem:azero}, $a\ge 1$. Hence the
signature of $\Gamma$ is either $(1,1,2)$ or $(2,2,2)$.

If $(a, b, c) = (1, 1, 2)$,
then, by Theorem~\ref{thm:trieq1},
$\G$ is the skeleton of the truncation
of a connected map $\M$ with face cycles of length $2$.
Since every edge belongs to two faces
and every face is surrounded by two edges,
the number of faces equals the number of edges.
The Euler characteristics $\chi(\s)$ of the underlying surface $\s$
thus equals $|V(\M)|$.
Since $\chi(\s) \le 2$, it follows that $\M$ has one or two vertices,
depending on whether $\s$ is the projective plane or the sphere
-- in particular,
the skeleton of $\M$ is an $\ell$-regular graph for some $\ell > 4$.
If $\M$ has one vertex only, then it consists of $\ell/2$
loops embedded onto the projective plane
in such a way that its truncation
is the Möbius ladder $M_n$ with $n = \ell/2 \ge 4$,
see Figure~\ref{fig:proj}(a)
(note that $M_3\cong K_{3,3}$ has signature $(4, 4, 4)$).
On the other hand, if $\M$ has two vertices,
then $\M$ is the map with two vertices and $\ell$ parallel edges
embedded onto the sphere.
The graph $\G$ is then isomorphic
to the $n$-prism $Y_n$ with $n = \ell \ge 5$.

If $(a, b, c) = (2, 2, 2)$,
then, by Theorem~\ref{thm:abc2},
$\G$ is the skeleton of a $\{4, 3\}$-map
embedded on a surface of Euler characteristic $\chi = n/4 > 0$.
As above, $\chi\le 2$ and thus $\chi = 1$ or $2$.
For $\chi = 1$, we get the hemicube on the projective plane
(see Figure~\ref{fig:proj}(b)),
and its skeleton is isomorphic to $K_4$ of girth $3$.
For $\chi = 2$, we get the cube on a sphere,
and its skeleton is isomorphic to $Y_4$ with signature $(2, 2, 2)$.
This completes the case $c=2$.

If $c=1$, then since $a+b+c$ is even, we see that
$a=0$ and $b=1$, and then by Theorem~\ref{thm:azero},
$\G$ is the truncation of a $4$-regular graph
with respect to some dihedral scheme.
\end{proof}

\begin{figure}[t]
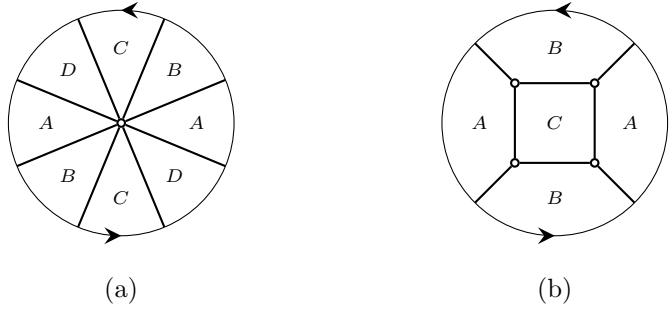

\makebox[\textwidth][c]{
\begin{tabular}{C{0.4}cC{0.4}}
\leavevmode\beginpgfgraphicnamed{fig-projective}\input{tikz/projective.tikz}\endpgfgraphicnamed
&\quad&
\leavevmode\beginpgfgraphicnamed{fig-hemicube}\input{tikz/hemicube.tikz}\endpgfgraphicnamed
\\
(a) && (b)
\end{tabular}
}
\caption{(a) A $\{2, 8\}$-map with a single vertex,
four edges and four labelled faces
embedded on the projective plane.
Its truncation has the graph $M_4$ with signature $(1, 1, 2)$
as its skeleton.
(b) The hemicube on the projective plane with labelled faces.
Its skeleton is the graph $K_4$.}
\label{fig:proj}
\end{figure}

\section{Cubic girth-regular graphs of girth $5$}
\label{sec:g5}

\begin{theorem} \label{thm:g5}
Let $\G$ be a connected cubic girth-regular graph of girth $5$.
Then either the signature of $\G$ is $(0, 1, 1)$ and $\G$
is the truncation of a $5$-regular graph
with respect to some dihedral scheme, or $\G$ is isomorphic to the Petersen graph or to the dodecahedron graph.
\end{theorem}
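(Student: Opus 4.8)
The plan is to let $(a,b,c)$ be the signature of $\G$ and to use that $g=5$ is odd, so $d=\lfloor g/2\rfloor = 2$. First I would dispose of the extreme and degenerate signatures. By Theorem~\ref{the:1} we have $c \le (k-1)^d = 4$; if $c=4$ then Theorem~\ref{the:3} forces $\G$ to be $K_4$ or the Petersen graph, and since $g=5$ only the Petersen graph survives. If $a=0$, Lemma~\ref{lem:azero} gives the signature $(0,1,1)$ and Theorem~\ref{thm:azero} identifies $\G$ as the truncation of a dihedral scheme on a $5$-regular graph. Finally, Corollary~\ref{cor:oddg} rules out $a=1$ (the girth being odd), so from now on I may assume $2 \le a \le b \le c \le 3$.

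Next I would pin down the surviving signatures. Part~\eqref{lem:eventriineq:1} of Lemma~\ref{lem:eventriineq} forces $a+b+c$ to be even, and Lemma~\ref{lem:ab} with $m = 2^{d-1} = 2$ gives $b \le a-c+2m = a-c+4$. Running through the finitely many triples with $2 \le a\le b\le c\le 3$, the only ones with $a+b+c$ even are $(2,2,2)$ and $(2,3,3)$, so it remains to analyse these two cases. For signature $(2,2,2)$, Theorem~\ref{thm:abc2} exhibits $\G$ as the skeleton of a $\{5,3\}$-map on a surface of Euler characteristic $\chi = n(3/5 - 1/2) = n/10$. Since $\chi$ is an integer with $0<\chi \le 2$, I get $\chi \in \{1,2\}$, i.e.\ $n \in \{10,20\}$. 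For $n=10$ the skeleton would be a cubic graph of girth $5$ on $10$ vertices; by the Moore bound (Proposition~\ref{prop:vertbound}) and the uniqueness recorded in Note~\ref{note:prop:vertbound}, this is the Petersen graph, whose signature is $(4,4,4) \neq (2,2,2)$ -- a contradiction. Hence $\chi = 2$, the surface is the sphere, the unique spherical $\{5,3\}$-map is the dodecahedron, and $\G$ is the dodecahedral graph.

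The remaining, and genuinely hardest, task is to rule out $(2,3,3)$ entirely. Here the edges with $\epsilon = 2$ form a perfect matching $M$ and those with $\epsilon = 3$ form a $2$-factor $F$, and each vertex lies on exactly $4$ girth cycles, distributed as $2+1+1$ over its three $2$-paths, with the heavy--heavy $2$-path carrying the maximal $2$ pentagons (this is the equality case of Lemma~\ref{lem:ab}). My plan is a local analysis around a vertex $v$ with $F$-neighbours $w,t$: the two pentagons on the $2$-path $wvt$ cannot share a common second-neighbour, for that would create a $4$-cycle, so they realise a matching $w_1t_1$, $w_2t_2$ between the second-neighbours of $w$ and of $t$. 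Feeding this back through the single pentagons carried by the two light--heavy $2$-paths forces further adjacencies among the second-neighbours; tracing these while respecting the $M/F$ labelling, the configuration is driven either to produce a triangle or a $4$-cycle, or to close up on exactly $10$ vertices into the Petersen graph, whose signature is $(4,4,4)$ rather than $(2,3,3)$. In each branch one reaches a contradiction, so $(2,3,3)$ cannot occur and the classification is complete.

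The step I expect to be the main obstacle is precisely this elimination of $(2,3,3)$. Unlike the $(2,2,2)$ case, there is no prior structure theorem to invoke, and the local forcing splits into several branches according to which of $w_1,w_2$ (respectively $t_1,t_2$) lies on the $2$-factor $F$ and how the matching $M$ threads through the forced pentagons and propagates along the cycles of $F$. Keeping this case bookkeeping organised -- and verifying in every branch that an edge is pushed onto a fourth girth cycle, or that a cycle shorter than $5$ appears, or that one is forced into the Petersen graph with the wrong signature -- is where the real work lies, rather than in any single clever observation.
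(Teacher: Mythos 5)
Your reduction is correct and follows the paper's proof exactly: Theorem~\ref{the:1} gives $c\le 4$; Theorem~\ref{the:3} handles $c=4$; Lemma~\ref{lem:azero} and Theorem~\ref{thm:azero} handle $a=0$; Corollary~\ref{cor:oddg} kills $a=1$; parity (Lemma~\ref{lem:eventriineq}\eqref{lem:eventriineq:1}) leaves only $(2,2,2)$ and $(2,3,3)$; and your treatment of $(2,2,2)$ via Theorem~\ref{thm:abc2}, the Euler characteristic $\chi=n/10$, the Moore bound for $n=10$ and the uniqueness of the spherical $\{5,3\}$-map for $n=20$ is exactly the paper's argument. Your structural observations about $(2,3,3)$ are also right: the $\epsilon=2$ edges form a perfect matching, the $\epsilon=3$ edges a $2$-factor, and the pentagon counts on the three $2$-paths at each vertex are $2,1,1$ with the heavy--heavy path carrying $2$.

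The gap is that the elimination of $(2,3,3)$ --- which you yourself identify as the crux --- is never carried out. You describe a plan (``tracing these while respecting the $M/F$ labelling, the configuration is driven either to produce a triangle or a $4$-cycle, or to close up \dots into the Petersen graph'') and then assert that every branch ends in contradiction, but no branch is actually exhibited, and the claim that some branch closes up into the Petersen graph is speculation rather than something you derive. For comparison, the paper's argument is a concrete forcing chain: take an edge $uv$ with $\epsilon(uv)=3$, so the three pentagons through $uv$ give three vertices $w_{00},w_{10},w_{11}$ with $u_0\sim w_{00}\sim v_0\sim w_{10}\sim u_1\sim w_{11}\sim v_1$; one first shows $\epsilon(uu_0)=\epsilon(vv_1)=2$ (assuming $\epsilon(uu_0)=3$ forces $w_{00}\sim w_{11}$ and $x\sim w_{10}$, putting $uu_1$ on four pentagons), hence the saturated edges at $u_0$ are $u_0x$ and $u_0w_{00}$ and at $v_1$ they are $v_1w_{11}$ and $v_1y$; then the pentagon counts on $u_0w_{00}$ and $v_1w_{11}$ each demand a common neighbour that only $w_{10}$ could supply, and the two demands are incompatible. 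Until you produce an argument of this kind --- listing the forced adjacencies and showing in each case that some edge acquires a fourth girth cycle or a cycle of length at most $4$ appears --- the proof is incomplete at its hardest point.
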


\begin{proof}
Let $(a,b,c)$ be the signature of $\Gamma$.
By Theorem~\ref{the:1}, we see that $c\le 4$.
If $c=4$, then by Theorem~\ref{the:2} and Theorem~\ref{the:3}, the signature of $\Gamma$ is $(4,4,4)$ and
 $\Gamma$ is isomorphic to the Petersen graph. We may thus assume that $c\le 3$.

If $a=0$, then by Lemma~\ref{lem:azero} the signature of $\Gamma$ is $(0,1,1)$, and then by Theorem~\ref{thm:azero},
$\Gamma$ is the truncation of a $5$-regular graph with respect to some dihedral scheme. Moreover, by Corollary~\ref{cor:oddg}, $a\not =1$.
We may thus assume that $a\ge 2$.

If $c=2$,
then the signature of $\Gamma$ is $(2,2,2)$ and by Theorem~\ref{thm:abc2},
$\G$ is the skeleton of a $\{5, 3\}$-map
embedded on a surface of Euler characteristic $\chi = n/10$,
where $n$ is the order of the graph $\G$.
In particular, $\chi \in \{1,2\}$.
If $\chi = 1$, then $n=10$ and since the girth of $\G$ is $5$,
Proposition~\ref{prop:vertbound} and Note~\ref{note:prop:vertbound}
imply that $\G$ is the Petersen graph (whose signature is in fact $(4,4,4)$).
If $\chi = 2$,
then $n=20$ and $\G$ is the skeleton of a $\{5,3\}$-map on the sphere.
It is well known that there is only one such map, namely the dodecahedron.

Finally, suppose that $c=3$.
Then, by Lemma~\ref{lem:eventriineq}, $a+b$ is odd,
and since $a\ge 2$, the signature of $\G$ is $(2,3,3)$.
We will now show that this possibility does not occur.

Let $uv$ be an edge of $\G$ lying on three $5$-cycles,
and $u_0, u_1, v_0, v_1$ be vertices of $\G$ with adjacencies
$u_0 \sim u \sim u_1$ and $v_0 \sim v \sim v_1$.
Then there should be three vertices
adjacent to one of $u_0, u_1$ and one of $v_0, v_1$.
Without loss of generality,
let $w_{00}, w_{10}, w_{11}$ be vertices such that
$u_0 \sim w_{00} \sim v_0 \sim w_{10} \sim u_1 \sim w_{11} \sim v_1$.
Further, let $x$ be the neighbour of $u_0$ other than $u$ and $w_{00}$,
and let $y$ be the neighbour of $v_1$ other than $v$ and $w_{11}$,
see Figure~\ref{fig:g5c3}(a).
Observe that $x\not = y$,
for otherwise the edge $uv$ would belong to four $5$-cycles.
Note also that $x$ is not adjacent to any of the three neighbours of $v$,
for otherwise the girth of $\G$ would be at most $4$.

The signature implies that for each vertex,
two edges incident to it lie on three $5$-cycles.
Suppose that $uu_0$ lies on three $5$-cycles.
As $x$ and $v$ have no common neighbours,
$w_{00}$ and $x$ must have a common neighbour with $u_1$,
so we have $w_{00} \sim w_{11}$ and $x \sim w_{10}$,
see Figure~\ref{fig:g5c3}(b).
But then the edge $uu_1$ lies on four $5$-cycles, contradiction.
Therefore, the edge $uu_0$ must lie on two $5$-cycles,
and a similar argument shows the same for $vv_1$.
Thus, the arcs $uu_1$, $vv_0$, $u_0 x$, $u_0 w_{00}$,
$v_1 w_{11}$ and $v_1 y$ must lie on three $5$-cycles,
see Figure~\ref{fig:g5c3}(c).

\begin{figure}[t]
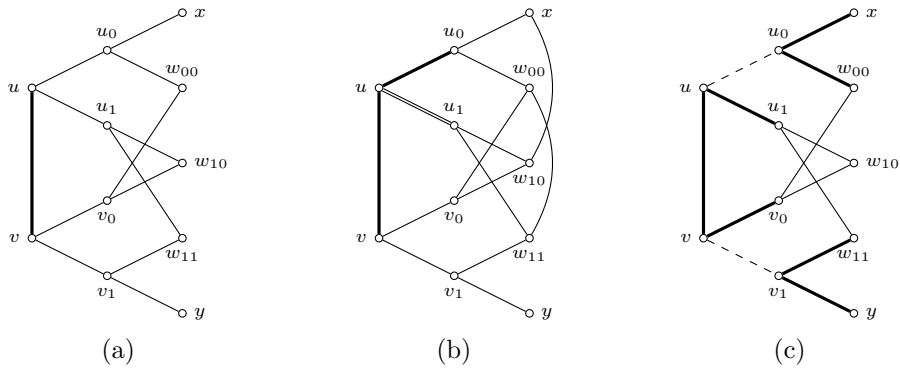

\makebox[\textwidth][c]{
\begin{tabular}{C{0.3}C{0.35}C{0.3}}
\leavevmode\beginpgfgraphicnamed{fig-g5-c3}\input{tikz/g5-c3.tikz}\endpgfgraphicnamed
&
\leavevmode\beginpgfgraphicnamed{fig-g5-c3a}\input{tikz/g5-c3a.tikz}\endpgfgraphicnamed
&
\leavevmode\beginpgfgraphicnamed{fig-g5-c3b}\input{tikz/g5-c3b.tikz}\endpgfgraphicnamed
\\
(a) & (b) & (c)
\end{tabular}
}
\caption{Constructing a graph of girth $5$ with $c = 3$.
The bold edges lie on three $5$-cycles,
and the dashed edges lie on two $5$-cycles.
The general setting is shown in (a).
In (b), the arc $(u, u_0)$ is assumed to lie on three $5$-cycles,
but the doubled edge then lies in four $5$-cycles.
In (c), the obtained distribution of edges among cycles is shown,
which, however, cannot be completed.}
\label{fig:g5c3}
\end{figure}

Since the edge $u_0 w_{00}$ lies on three $5$-cycles,
there should be three vertices adjacent to one of $u, x$
and one of $v_0$ and the remaining neighbour of $w_{00}$.
Similarly, $v_1 w_{11}$ lying on three $5$-cycles
implies that there should be three vertices adjacent to one of $v, y$
and one of $u_1$ and the remaining neighbour of $w_{11}$.
As $w_{10}$ is the only potential common neighbour
for $x, v_0$ and for $y, u_1$,
it follows that at least one of these pairs does not have a common neighbour.
Without loss of generality,
we may assume that $x$ and $v_0$ do not have a common neighbour.
The vertex $u$ already has a common neighbour with $v_0$,
and it must also have a common neighbour
with the remaining neighbour of $w_{00}$.
Then the remaining neighbour of $w_{00}$ must be $w_{11}$,
which however has no common neighbour with $x$, contradiction.
Therefore, $(a, b, c) = (2, 3, 3)$ is not possible.
\end{proof}

\section{Concluding remarks}

Theorem~\ref{the:main} gives a complete classification of
simple connected cubic girth-regular graphs of girths up to $5$.
While extending the classification to non-simple graphs
(i.e., girths $1$ and $2$) is straightforward,
increasing the girth leads to exponentially many more possible signatures.
For example, the census of connected cubic vertex-transitive graphs
on at most $1280$ vertices by Potočnik, Spiga and Verret~\cite{psv13}
shows that $9$ distinct signatures appear among graphs of girth $6$,
while many more signatures are allowed by the results
in Sections~\ref{sec:intro},~\ref{sec:bound} and Subsection~\ref{sec:cvt}.
A classification of connected cubic vertex-transitive graphs of girth $6$
will thus be given in a follow-up paper.

\begin{footnotesize}
\bibliographystyle{abbrv}
\bibliography{reference}
\end{footnotesize}

\end{document}